\title{Mixed motives}
\author{Luca Barbieri-Viale}
\address{Dipartimento di Matematica ``F. Enriques", Universit{\`a} degli Studi di Milano\\  Via C. Saldini, 50\\ I-20133 Milano\\ Italy }
\email{luca.barbieri-viale@unimi.it}
\subjclass [2000]{18F99, 14F99, 19E15, 14F42}
\keywords{Cohomology theory; mixed motives}
\newtheorem{thm}{Theorem}[subsection]
\newtheorem{propose}[thm]{Proposition}
\newtheorem{lemma}[thm]{Lemma}
\theoremstyle{definition}
\newtheorem{defn}[thm]{Definition}
\newtheorem{hyp}[thm]{Hypothesis}
\newtheorem{conjecture}[thm]{Conjecture}
\newtheorem{remark}[thm]{Remark}
\newtheorem{remarks}[thm]{Remarks}
\newtheorem{example}[thm]{Example}
\newcommand{\N}{\mathbb{N}}
\newcommand{\Z}{\mathbb{Z}}     
\newcommand{\Q}{\mathbb{Q}}     
\newcommand{\C}{\mathbb{C}}     
\newcommand{\Aff}{\mathbb{A}}   
\newcommand{\un}{\mathbb{1}}
\newcommand{\Spec}{\operatorname{Spec}} 
\newcommand{\Ex}{\operatorname{\sf Ex}}      
\newcommand{\Cat}{\operatorname{\sf Cat}}
\newcommand{\DM}{\operatorname{DM}}          
\newcommand{\MW}{\operatorname{MW}}
\newcommand{\cNM}{\mathcal{NM}}  
\newcommand{\cMW}{\mathcal{MW}}  
\newcommand{\Alb}{\operatorname{Alb}}     
\newcommand{\Sm}{\operatorname{Sm}}
\newcommand{\Sch}{\operatorname{Sch}}
\newcommand{\SmProj}{\operatorname{Sm}^{\operatorname{proj}}}
\newcommand{\Corr}{\operatorname{Corr}}
\newcommand{\End}{\operatorname{End}}      
\newcommand{\car}{\operatorname{char}}
\newcommand{\Tr}{\operatorname{Tr}}
\renewcommand{\L}{\mathbb{L}}
\newcommand{\G}{\mathbb{G}}
\renewcommand{\P}{\mathbb{P}}     
\newcommand{\im}{\operatorname{Im}}        
\renewcommand{\ker}{\operatorname{Ker}}  
\newcommand{\gr}{\operatorname{gr}}        
\newcommand{\longby}[1]{\stackrel{#1}{\longrightarrow}}
\newcommand{\iso}{\longby{\sim}}
\renewcommand{\tilde}{\widetilde}
\newcommand{\ie}{{\it i.e. }}
\newcommand{\cf}{{\it cf. }}
\newcommand{\eg}{{\it e.g. }}
\newcommand{\loccit}{{\it loc. cit. }}
\newcommand{\et} {{\operatorname{\acute{e}t}}}
\newcommand{\add}{{\operatorname{add}}}
\newcommand{\eff}{{\operatorname{eff}}}
\newcommand{\rig}{{\operatorname{rig}}}
\newcommand{\gm}{{\operatorname{gm}}}
\newcommand{\rat}{{\operatorname{rat}}}
\newcommand{\op}{{\operatorname{op}}}
\newcommand{\ab}{{\operatorname{ab}}}
\newcommand{\into}{\hookrightarrow}
\renewcommand{\lim}{\varprojlim}
\newcommand{\boxtensor}{\def\boxtimesten{\Box\kern-7.59pt\raise1.2pt
\hbox{$\times$} }}                                  
\newcounter{elno}                   
\newcommand{\cA}{\mathcal{A}}
\newcommand{\cB}{\mathcal{B}}
\newcommand{\cI}{\mathcal{I}}
\newcommand{\cM}{\mathcal{M}}
\newcommand{\cS}{\mathcal{S}}
\newcommand{\cT}{\mathcal{T}}
\newcommand{\cU}{\mathcal{U}}
\newcommand{\cW}{\mathcal{W}}
\renewcommand{\phi}{\varphi}
\renewcommand{\epsilon}{\varepsilon}
\begin{document}
\begin{abstract}
A mixed Weil cohomology with values in an abelian rigid tensor category is a cohomological functor on Voevodsky's category of motives which is satisfying K\"unneth formula and such that its restriction to Chow motives is a Weil cohomology. We show that the universal mixed Weil cohomology exists. Nori motives can be recovered as a universal enrichment of Betti cohomology via a localisation. This new picture is drawing some consequences with respect to the theory of mixed motives in arbitrary characteristic.
\end{abstract}
\maketitle

\hfill {\small \it Dedicated to Jaap Murre}\\

\section{Introduction}
The notion of generalised Weil cohomology is conceived in \cite{BVKW} showing that the universal theory exists for smooth projective varieties over any field or suitable base scheme. It is then natural to seek for a corresponding notion of mixed Weil cohomology for algebraic or arithmetic schemes and show that also the universal one exists. 

A construction of the universal mixed Weil cohomology is the main task of this paper along with the expected picture arising from a theory of mixed motives: in the latter purpose this paper is also the natural continuation of \cite{BV}. 

Note that mixed Weil cohomologies have been considered by Cisinski-D\'eglise \cite{CD} and Ayoub \cite{AW} and \cite{AWH} but we here follow what just hinted by Andr\'e \cite[\S 14.2.4]{AM} and develop it in our general framework of generalised cohomologies, especially, that of \cite[Def.\,4.2.1]{BVKW}. Since Weil cohomologies should be recovered from the mixed as pure, the key requirement for a mixed Weil cohomology is that its restriction to smooth projective varieties is a Weil cohomology. 

\subsection{Mixed versus pure} To set a general target for our cohomology theories we just work with an abelian rigid tensor $\Q$-linear category $(\cA, \otimes, \un)$ together with a Lefschetz object $L\in\cA$ for which we write  $A(q):= A\otimes L^{\otimes -q}$ for any $A\in \cA$ and any $q\in \Z$. Recall that a Weil cohomology with values in $(\cA, L)$ is nothing else than a tensor $\Q$-linear graded functor
$$H^*: \cM_\rat\to \cA^{(\Z)}$$
from the rigid category $\cM_\rat$ of Chow motives to that of finitely supported graded objects $\cA^{(\Z)}$ satisfying effectivity and trace conditions, \eg it is required the existence of an isomorphism $\Tr: H^2(\L)\iso L$ for $\L\in\cM_\rat$ the Lefschetz Chow motive (see \cite[Prop.\,4.4.1]{BVKW} for details). 

The key idea is that Voevodsky motives must play the same rôle for mixed Weil cohomologies as Chow motives do for Weil cohomologies. In fact, the opposite category of Chow motives $\cM_\rat^\op$ can be regarded as a full tensor subcategory of Voevodsky's category $\DM_\gm$, the triangulated rigid tensor $\Q$-linear category of geometric motives (see \cite{V} and \cite[Thm.\,18.3.1.1]{AM}). 
Therefore, it is natural to consider tensor $\Q$-linear graded functors
$$H^*: \DM_\gm^\op\to \cA^{(\Z)}$$
which are arising from an effectively bounded cohomological functor $H$ (Definition \ref{bound}) and such that their restriction to $\cM_\rat$ are Weil cohomologies: this is our actual definition of mixed Weil cohomology (Definition \ref{mixweil}). Remark that as $\L\in \cM_\rat$ is sent to $\Z (1)[2]\in\DM_\gm$ we have that $H^*(\Z(q)[2q])$ is isomorphic to $\un (-q)=L^{\otimes q}$ concentrated in degree $2q$ for any $q\in \Z$. 

By the way, our setting suffices to include Cisinski-D\'eglise mixed Weil cohomologies via a realisation functor (as in \cite[Thm.\,3]{CD}, see Example \ref{ex}) and, in particular, the Grothendieck class given by $\ell$-adic relative cohomologies of algebraic schemes defined over a field $k$, in any characteristic with $\car k\neq \ell$, Betti and de Rham cohomologies in zero characteristic, and rigid cohomology in positive characteristics, nowadays called classical mixed Weil cohomologies. A paradigmatic non-classical example of mixed Weil cohomology is that of the so called de Rham-Betti cohomology over the field $\bar \Q$ of algebraic numbers (see \cite[\S 7.1.6]{AM}, \cite[\S 2]{HU}, \cite[Chap.\,5]{HMN} and \cite{ABVB}) or the absolutely flat version of Ayoub's new Weil cohomology (see \cite[Ex.\,3.2]{BV}). 

\subsection{Universality} The key properties of a mixed Weil cohomology are direct consequences of that of Voevodsky motives: we get a relative cohomology (Lemma \ref{rel}) satisfying a K\"unneth formula for pairs and a relative duality (Lemma \ref{dual}). 

Notably, these cohomology theories can be pushed forward along exact strong tensor functors and we can show that the induced 2-functor of mixed Weil cohomologies is representable (Theorem \ref{mixed}). This result is providing a cohomological functor $MW$ whose target category is denoted $\cMW$, which is endowed with a tautological Lefschetz object, and a mixed Weil cohomology
$$MW^*: \DM_\gm^\op\to \cMW^{(\Z)}$$ 
such that for any mixed Weil cohomology $H$ with values in $(\cA, L)$ there exists a unique exact tensor functor $$\Psi_H: \cMW\to \cA$$ such that $\Psi_HMW^p\cong H^p$ for all $p\in\Z$ compatibly with the Lefschetz objects. 
We also obtain the tight variant $MW^+$ of $MW$ (Theorem \ref{tight}) with target category denoted $\cMW^+$, which is a quotient of $\cMW$ by a Serre tensor ideal, in order to impose weak and hard Lefschetz, Albanese invariance and normalisation (Definition \ref{mixtightweil}). The functor $\Psi_H$ factors through $\cMW^+$ providing the classifying functor $$\Psi_H^+:\cMW^+\to \cA$$ if and only if $H$ is tight. 

Moreover, for any mixed Weil cohomology $H$ with values in $(\cA, L)$ we obtain its universal enrichment $\cMW_H$ (Theorem \ref{initial}), again obtained as a quotient of $\cMW$, by the kernel of $\Psi_H$ this time, such that $H$ is now a realisation (Definition \ref{equi}) along the induced faithful tensor functor
$$\bar\Psi_H: \cMW_H\into \cA$$
and universal with respect to this property: the pushforward of the universal theory along the quotient is the universal enrichment of $H$ and it is tight if $H$ is. 

Remark that these quotient abelian categories of a tensor exact abelian category inherit a unique tensor structure such that the projection is a tensor functor and this tensor structure is exact. If we started with a rigid category its quotient remains rigid (see \cite[Prop.\,4.5]{BVK}).

As an application, for Betti cohomology $H=H_{\rm Betti}$, we can see that the abelian rigid tensor category $\cMW_H$ is tensor equivalent to Nori motives (Theorem \ref{Nori}) rebuilding it by a straightforward construction without recourse to quivers, good pairs nor basic lemma for the tensor structure. 

In general, we further show a simple way of constructing regulator mappings (Lemma \ref{reg}) which yields a universal regulator map 
$$r^{p,q}: H^{p, q}_{\rm M}(X)\to \cMW (\un, MW^{p}(X)(q))$$
where $H^{p, q}_{\rm M}(X)$ is Voevodsky's motivic cohomology of any $X$ algebraic scheme. This yields a universal cycle class map
$$c\ell^q: CH^{q}(X)_\Q\to\cMW (\un,MW^{2q}(X)(q))$$
for $X$ smooth and projective.

To compare with the pure case, recall the existence of the universal tight Weil cohomology $W_\ab^+$ with values in the abelian rigid category $\cW_\ab^+$ (previously constructed in \cite[Thm.\,8.4.1]{BVKW}); by construction, since the restriction of a mixed (tight) Weil cohomology to Chow motives is a (tight) Weil cohomology (Lemmas \ref{weil}, \ref{pure} and Theorem \ref{tight}), we obtain a comparison exact tensor functor
$$\Phi^+: \cW_\ab^+\to \cMW^+$$
whose essential image is contained in the stricly full subcategory $\cMW^+_{\rm pure}$ of $\cMW^+$ given by the minimal abelian tensor subcategory containing the universal cohomology of smooth projective varieties (Lemma \ref{purification}).

\subsection{Motivic pictures} Following Andr\'e \cite[\S 0.3]{AB} a category of mixed motives over a subfield of the complex numbers, is a $\Q$-linear Tannakian category \cite{DT} for which Betti cohomology is a fibre functor, which is the target of a universal cohomology theory and whose morphisms are of geometric origin, that is, they should be given by correspondences. Voevodsky's $\Q$-linear theory of mixed motives over a field of arbitrary characteristic, as stated in \cite[\S 4]{VN}, is a stronger formulation of these principles and it provides $\DM_\gm$ as the bounded derived category of this Tannakian category (up to equivalences) if such a category exists. 

The existence of the universal mixed (tight) Weil cohomology suggests a new unconditional context from which we can look at a theory of mixed motives. However, the construction of this universal theory doesn't grant that the target abelian rigid category $\cMW^+$ is Tannakian: a priori, it could well be given by a product of Tannakian categories! This depends on the simplicity of the unit object, only! Moreover, this property is equivalent to the fact that all mixed (tight) Weil cohomologies are equivalent in the sense of Definition \ref{equi} and Theorem \ref{mixmot}. 

For pure motives, the standard hypothesis that $\cW_\ab^+$ is Tannakian is a consequence of Grothendieck standard conjectures, see \cite[Cor.\,3.9 \& Hyp.\,3.10]{BV}. Therefore, following Andr\'e and Voevodsky, the hypothesis that $\cMW^+$ is Tannakian (Hypothesis \ref{standard} and Remarks \ref{rmkstd}) is the corresponding mixed analogue. If the named comparison functor $\Phi^+$ is fully faithful these two hypotheses are equivalent and, over a subfield of the complex numbers, these hypotheses imply that the essential image of $\Phi^+$ is $\cMW^+_{\rm pure}$ indentified with Andr\'e motives inside $\cMW^+$ in turn identified with Nori motives (Proposition \ref{prop}). In particular, $\cMW^+_{\rm pure}$ is semisimple and $MW^+$ is cellular in this case (by \cite[Thm.\,0.4]{A} and \cite[\S 9.2]{HMN}, respectively). 

Unconditionally, it seems plausible that $\Phi^+$ is fully faithful with essential image $\cMW^+_{\rm pure}$ split (Conjecture \ref{conj}) even if the source and target categories of $\Phi^+$ were not Tannakian and, additionally, we may show that the universal relative cohomology $MW^+$ is always cellular, even in positive characteristics, and represented (as in Example \ref{ex}) by a
triangulated tensor functor
$$R^+:\DM_\gm^\op\to D^b(\cMW^+)$$
as hinted in Remark \ref{basic} akin to the situation with Nori cohomological motives (modulo duality, compare with \cite[Prop.\,7.12]{CG} and \cite{HMN}). Cellularity would yields representability for all mixed tight Weil cohomologies via the composition $R^+_H:=R\Psi_H^+ R^+$ where 
$$R\Psi_H^+:  D^b(\cMW^+)\to  D^b(\cA)$$
is the canonical derived functor of the named $\Psi_H^+$ which is exact. Whether the functor $R^+$ shall be an equivalence or not is just related to the geometric origin of the universal theory.

Finally, $MW^+$ cellular with $\cMW^+$ Tannakian would also be the universal theory of motivic type in the sense of Voevodsky, see \cite[\S 4]{VN} for details: just recall that such theories are given by a Tannakian category $\cA$ and a functor $M:\Sch_k^\op\to D^b(\cA)$, along with a comparison with $\ell$-adic cohomology. Voevodsky claims that any theory of motivic type can be extended to a triangulated tensor functor $R_M:\DM_\gm\to D^b(\cA)$ and, actually, as in Example \ref{ex}, this yields a mixed Weil cohomology, say $H$, that would be an enrichment of $\ell$-adic cohomology and $\cMW^+\cong \cMW_H$ for any such $H$ which is also tight. Thus it yields $\Psi_{H}^+$ whence a factorisation of $R_M$ through $D^b(\cMW^+)$ via $R^+$ as above.

\subsubsection*{Acknowledgements} 
I would like to thank IHES for providing support, hospitality and excellent working conditions. I'm glad to thank Bruno Kahn for several suggestive discussions on almost all matters contained in this paper.

\subsubsection*{Notations and assumptions} Adopt current conventions on small and large categories, considering a fixed universe when small or locally small is specified. For abelian categories $\cA$, we say that a full abelian subcategory $\cB\subseteq \cA$ is \emph{generated} by a set of objects $\{A_i\mid A_i\in \cA\}_{i\in I}$ if it is the minimal abelian full subcategory $\cB$ such that $A_i\in \cB$, just given by the subquotients of $A_i$ for all $i\in I$. Say that $\cB$ is a \emph{Serre or thick} subcategory of an abelian category $\cA$ if it is closed by subobjects, quotients and extensions. Refer to the abelian category  $\cA/\cB$ as the Serre \emph{quotient} category. Similarly, a \emph{thick} subcategory of a triangualted category is a  full triangulated subcategory closed under direct summands. 
For $(\cA, \otimes, \un)$  \emph{$\otimes$-abelian}  we mean an abelian $\Q$-linear category endowed with an \underline{exact} symmetric unital monoidal structure. In this case we refer to a Serre \emph{ideal} for a Serre subcategory which is also a tensor ideal providing the quotient $\cA/\cS$ with the induced tensor structure (see \cite[Prop.\,4.5]{BVK}).  Following \cite[Appendix 8A]{VL} \emph{$\otimes$-triangulated} category $(\cT, [\ \ ],  \otimes, \un)$ is an additive category which is a triangulated category and a tensor category together with natural isomorphisms $r: -\otimes +[1]\iso (-\otimes +)[1]$ and $l : -[1]\otimes +\iso (-\otimes +)[1]$  which commute with the associativity, commutativity and unity isomorphisms. Note that under the symmetry iso $\sigma: M\otimes N\iso N\otimes M$ we have that $\sigma l\sigma = r$. A \emph{triangulated tensor} functor is a strong symmetric monoidal functor which is unital, additive, commutes with translation and preserves the distinguished triangles.

\section{Cohomological functors} 
\subsection{Voevodsky motives} \label{1.1}
Let $\Sch_k$ be the category of schemes which are separated and of finite type over a fixed base field $k$. Let $\Sm_k$ be the full subcategory of $\Sch_k$ given by smooth schemes and $\SmProj_k$ that of smooth and projective varieties.  

Let $\Corr_k$ be the category of Voevodsky correspondences: same objects as $\Sm_k$ but morphisms are finite
correspondences and compositions of morphisms are compositions of correspondences. 
We have a covariant functor  $[-] : \Sm_k\to \Corr_k$ sending a morphism $f : X \to Y$ to its graph, as a finite correspondence
from $X$ to $Y$. Recall (see \cite{V} or \cite[\S 16]{AM} for details) Voevodsky's category $$\DM_\gm^\eff:= (K^b(\Corr_k)/\cT)^\natural$$ defined as the pseudo-abelian envelope of the localisation of the homotopy category of bounded complexes $K^b(\Corr_k)$ at the thick subcategory $\cT$ generated by $[X\times \Aff^1_k]\to [X]$ and $[U\cap V]\to [U]\oplus[V]\to [X]$ for any $X\in \Sm_k$ and $X=U\cup V$ open cover. 

We then get a functor $M: \Sm_k\to \DM_\gm^\eff$ following Voevodsky notation in 
\cite[Def. 2.1.1]{V};  considering Chow correspondences we also have the category of Chow effective motives $\cM_\rat^\eff$ along with a contravariant functor $h: \SmProj_k\to \cM_\rat^\eff$ given by the graph of a morphism, see \cite[Def.\,4.1.3]{BVKW} from which we borrow the notation.

There are canonical symmetric monoidal structures on both $\cM_\rat^{\eff}$ and $\DM_\gm^\eff$, $h$ and $M$ are tensor functors 
and we get the following  commutative diagram
$$\xymatrix{\SmProj_k\ar[r]^{} \ar[d]_h & \Sm_k \ar[d]^{M} \\  
\cM_\rat^{\eff,\op} \ar[r]^{\iota^\eff} & \DM_\gm^\eff}
$$
where $\iota^\eff$ is a fully faithful tensor functor (see \cite[Prop.\,2.1.4]{V} and compare with \cite[Thm.\,18.3.1.1]{AM}). We have  $\Z := M(\Spec (k))= \iota^\eff(h(\Spec (k)))$ ( = strict unitality) and any rational point $x\in X(k)$ is providing a splitting $M(X) = \Z \oplus \tilde M(X)$ where $\tilde M(X)$ is the reduced motive (see \cite[\S 2.1]{V} and \cite[\S 16.2.5]{AM}).

Now let $\cM_\rat:= \cM_\rat^\eff[\L^{-1}]$ for  $\L:= h^2(\P^1_k)$ the effective Lefschetz object in $\cM_\rat^\eff$, and $\DM_\gm:= \DM_\gm^\eff[\Z (1)^{-1}]$ where, in Voevodsky's notation, 
$\Z (1):= \tilde M (\P^1_k)[-2]$ is the effective Tate object: here the reduced motive $\tilde M (\P^1_k)$ yields a decomposition $M (\P^1_k) = \Z\oplus \Z (1)[2]$, $\iota^\eff(\L)=\Z (1)[2]$, and $\Z (q):= \Z (1)^{\otimes q}$ in $\DM_\gm$ for any integer $q\in \Z$ (here $\Z (0):= \Z$). We consider $\cM_\rat$ and $\DM_\gm$ with $\Q$-coefficients: these categories are rigid and there is a fully faithful tensor functor 
$$\iota : \cM_\rat^\op \to\DM_\gm$$
extending $\iota^\eff$ above. In the following we shall regard $\cM_\rat$ in $\DM_\gm$ via $ \iota$. 

There is an extension of $M : \Sch_k \to \DM_\gm^\eff$ (with $\Q$-coefficients) and we also have the motive with compact support $M^c(X)$ for any $X\in\Sch_k$ such that $M^c(X)\to M(X)$ is an isomorphism if $X$ is proper and $M^c(X)=M(X)^\star(d)[2d]$ for $X$ smooth equidimensional $d = \dim (X)$, denoting by $(\ \ )^\star$ the dual in $ \DM_\gm$.

In general, set $H^{p, q}_{\rm M}(X) := \DM_\gm (M(X), \Z (q)[p])$, $H_{p}^{\rm SV}(X) := \DM_\gm (\Z [p],M(X))$ and $H_{p, q}^{\rm BM}(X) := \DM_\gm (\Z (q)[p], M^c(X))$
which we here refer to as Voevodsky's motivic cohomology, Suslin-Voevodsky homology and Borel-Moore homology $\Q$-vector spaces 
(see \cite[Def.\,16.20]{VL}, \cite[\S 2.1 \& 4.1]{V} and \cite[\S 16.2.5]{AM}).

\subsection{K\"unneth formula and relative cohomology}  Let $(\cA, \otimes, \un)\in \Ex^\otimes $ be $\otimes$-abelian.
Recall that a $\Q$-linear functor $H: \DM_\gm \to \cA$ is said to be homological if it takes a distinguished triangle $M \to N \to F \longby{+1}$ to an exact sequence $H(M) \to H(N) \to H(F)$ in $\cA$. 
\begin{defn} \label{cohext} Say that a homological functor $$H:\DM_\gm^\op\to \cA$$ is a \emph{cohomological functor}. Same definition applies to $\DM_\gm^\eff$. 
Say that $H$ is endowed with an \emph{external product} if there is a supplementary structure of a lax tensor functor $(H, \kappa, \upsilon)$ where, $\kappa$ is a collection of maps
$$\kappa_{M, N}: H(M) \otimes  H(N) \to H(M\otimes  N)$$
for $M, N\in \DM_\gm$ which are compatible with the associative constraints and the symmetry isomorphism, and  $\upsilon : \un\to H(\Z) $ is a morphism satisfying the unitality condition (compare with \cite[\S 3.2]{BVKW}). 
\end{defn}

For a cohomological functor $H$ endowed with an external product $\kappa$ we then have
$$\kappa_{M, N}^{i,j}: H(M[i])\otimes  H(N[j])\to H(M[i]\otimes N[j])\cong H((M\otimes N)[i+j])$$
given by composition of $\kappa$ with the canonical isomorphisms $M[i]\otimes N[j]\cong (M\otimes N)[i+j]$.
Note that under the symmetry isomorphisms $\tau$ we obtain
$$\xymatrix{H(M[i])\otimes  H(N[j])\ar[d]_-{(-1)^{ij}\tau}\ar[r]^-{\kappa_{M, N}^{i,j}} & H((M\otimes N)[i+j])\ar[d]^-{H(\tau [i+j])}  \\
H(N[j])\otimes  H(M[i]) \ar[r]^-{\kappa_{N, M}^{j,i}}  &H((N\otimes M)[i+j])}$$
coherently with the Koszul constraint (see \cite[Def.\,8A3]{VL} and \cite[Remark 3.2.4]{BVKW}). 
\begin{defn} \label{bound} Let $H$ be a cohomological functor.  Denote $H^p(M):=H(M[p])$, for all $p\in\Z$ and $M\in \DM_\gm$. Set
$$H^{p}(X):= H^{p}(M(X))\hspace{0.2cm}\text{and}\hspace{0.2cm} H_{p}(X):= H^{-p}(M^c(X)^\star)$$ for the cohomology and the (Borel-Moore) homology of $X\in\Sch_k$. 
Say that $H$ is \emph{bounded} if $H^*:= \{H^p\}_{p\in\Z}$ is such that
$$H^*:\DM_\gm^\op \to \cA^{(\Z)}$$
where $\cA^{(\Z)}$ is the tensor category of finitely supported graded objects. Say that it is \emph{effectively bounded} if further $H^p(X) =0$ for $p\notin [0, 2d ]$ for $d=\dim (X)$ and $X\in \Sm_k$ equidimensional.  If $(H, \kappa, \upsilon)$ bounded is endowed with an external product, let $(H^*,\kappa^*, \upsilon^*)$ be the induced graded cohomological functor endowed with the induced graded external product.
\end{defn}
\begin{lemma}\label{kunp}
  Let $(H,\kappa, \upsilon)$ be a bounded cohomological functor endowed with an external product. We then have that $(H^*,\kappa^*, \upsilon^*)$  is strong as a tensor functor if and only if the following are isomorphisms
  $$\kappa^{k}_{M,N}: \sum_{i+j=k} H^i(M)\otimes  H^j(N)\iso H^k(M\otimes N)$$
  for all $k\in\Z$, $M, N\in \DM_\gm$, $\upsilon: \un \iso H(\Z)$ is an isomorphism and $H^i(\Z)=0$ for $i\neq 0$ holds true. Moreover, in this case, $H$ is taking values in the $\otimes$-abelian full subcategory $\cA_\rig\subseteq \cA$ of dualisable objects; in particular, $H^i(M)\in\cA_\rig$ for all $i\in\Z$ and $M\in \DM_\gm$.
\end{lemma}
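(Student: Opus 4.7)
The plan is that the statement is essentially the unwinding of what ``strong tensor functor'' means for a functor landing in $\cA^{(\Z)}$, followed by the standard fact that strong tensor functors preserve duals.

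First I would spell out the tensor structure on $\cA^{(\Z)}$: for finitely supported graded objects $A^*, B^*$, the product is $(A^* \otimes B^*)^k = \bigoplus_{i+j=k} A^i \otimes B^j$ with unit $\un$ concentrated in degree $0$, and a morphism of graded objects is an iso iff each of its components is one. With this in hand, $\kappa^*_{M,N}\colon H^*(M)\otimes H^*(N)\to H^*(M\otimes N)$ is an iso in $\cA^{(\Z)}$ iff each degree-$k$ component $\kappa^k_{M,N}$ is an iso in $\cA$, and $\upsilon^*\colon \un \to H^*(\Z)$ is an iso iff $\upsilon\colon\un\iso H(\Z)$ is an iso and $H^i(\Z)=0$ for $i\neq 0$. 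This is precisely the stated equivalence; no further work is needed beyond recognising which component sits in which degree.

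For the rigidity claim I would invoke the general principle that a strong symmetric monoidal functor preserves dualisable objects. Since $\DM_\gm$ is rigid, any $M$ has a dual $M^\star$ with evaluation $M^\star \otimes M \to \Z$ and coevaluation $\Z \to M \otimes M^\star$ (the tensor unit of $\DM_\gm$ being $\Z$). Applying the contravariant $H^*$ and composing with the inverses of $\kappa^*$ and $\upsilon^*$ yields morphisms
$$\un \longrightarrow H^*(M^\star)\otimes H^*(M)\qquad\text{and}\qquad H^*(M)\otimes H^*(M^\star)\longrightarrow \un$$
in $\cA^{(\Z)}$; functoriality of $H^*$ transports the two triangle identities for $M$ to the corresponding identities for $H^*(M)$, exhibiting $H^*(M^\star)$ as a dual of $H^*(M)$ in $\cA^{(\Z)}$.

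To conclude, I would observe that a finitely supported graded object $V^*$ is dualisable in $\cA^{(\Z)}$ iff each component $V^i$ is dualisable in $\cA$: the forced candidate dual has $(V^{-i})^\vee$ in degree $-i$, and both its existence and the triangle identities decompose diagonally in the grading. Applied to $V^*=H^*(M)$ this gives $H^i(M)\in\cA_\rig$ for every $i\in\Z$ and every $M\in\DM_\gm$. There is no serious obstacle here; the only point worth keeping an eye on is that the Koszul signs entering the symmetry of $\cA^{(\Z)}$ do not affect the iso-checks, since everything splits component by component.
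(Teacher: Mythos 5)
Your argument is correct and is essentially the paper's proof written out: the paper simply cites \cite[Remarks 3.2.2 \& 3.2.4]{BVKW} for the componentwise unwinding of strongness in $\cA^{(\Z)}$ and \cite[Lemma 3.3.6 b)]{BVKW} for the fact that a strong (contravariant) tensor functor out of the rigid category $\DM_\gm$ lands in dualisable objects, which is exactly the evaluation/coevaluation transport you carry out by hand. The one clause of the statement you do not address is that $\cA_\rig$ is itself a \emph{$\otimes$-abelian full subcategory} of $\cA$: dualisability of each $H^i(M)$ does not by itself give that the full subcategory of dualisable objects is abelian (closed under kernels and cokernels computed in $\cA$). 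The paper gets this from the exactness of the tensor structure on $\cA$ via \cite[Prop.\,4.1]{BVK}; you should either cite that or supply the short argument that for an exact tensor product the dual of a kernel/cokernel of a map of dualisable objects is again the cokernel/kernel of the dual map.
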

\begin{proof}
    For strongness see \cite[Remarks 3.2.2 \& 3.2.4]{BVKW} and for rigidity \cite[Lemma 3.3.6 b)]{BVKW} since $\DM_\gm$ is rigid hence $H^*(M)$ is dualisable for all $M\in \DM_\gm$. Since the tensor structure of $\cA$ is exact then its full subcategory $\cA_\rig$ is abelian by \cite[Prop.\,4.1]{BVK}.
\end{proof}
\begin{defn}[\protect{K\"unneth and Point axioms}] \label{kunpdf}
Let $(H,\kappa, \upsilon)$ be a bounded cohomological functor endowed with an external product and let $(H^*,\kappa^*, \upsilon^*)$ be the induced graded one.  Say that $(H,\kappa, \upsilon)$ satisfies the \emph{K\"unneth} axiom if $\kappa^*$ is an isomorphism and the \emph{point} axioms if  $\upsilon^*$ is an isomorphism. 
\end{defn}
Equivalently, from Lemma \ref{kunp}, $H$ as in Definition \ref{kunpdf} is such that $H^*$ in Definition \ref{bound} is a strong tensor functor. Moreover, this yields a K\"unneth formula 
$$\kappa^{k}_{X,Y}: \sum_{i+j=k} H^i(X)\otimes  H^j(Y)\iso H^k(X\times Y)$$
for all $X, Y\in \Sch_k$ since $M(X)\otimes M(Y)\cong M(X\times Y)$ in $\DM_\gm^\eff$ by \cite[Prop.\,2.1.3 \& 4.1.7]{V}. By the way, for any cohomological functor $H$ the cohomology objects are also homotopy invariant $$H^*(X)\iso H^*(\Aff^1_X)$$ by \cite[Cor.\,4.1.8]{V} for any $X\in Sch_k$,  there is a Mayer-Vietoris sequence for open covers, projective bundle decomposition and the following exact sequence for abstract blow ups, \ie given by a proper birational morphism $f:X\to X'$, $Y=  f^{-1}(Y')$ and
$f: X-Y\iso X'-Y'$, 
$$\cdots\to H^{p-1}(Y)\longby{\partial^{p-1}}  H^p(X') \to  H^p(X)\oplus H^p(Y') \to H^p(Y)  \to\cdots $$ 
as a consequence of the properties in \cite[\S 2.2]{V}.

Finally, recall \cite[Def.\,1.1]{BVKD} where $M (X, Y)$ (resp.\, $M^Y (X)$) is the relative motive (resp.\, the motive with support) associated to a pair $(X, Y)$ for $X\in \Sch_k$ and $Y\subseteq X$ closed. Let $\Sch_k^\square$ be the category of such pairs.
\begin{lemma}[\protect{Relative cohomology}] \label{rel}
Let  $H$ be a bounded cohomological functor. Let $H^p(X, Y):= H^p (M(X, Y))$ and  $H^p_Y(X):= H^p (M^Y(X))$ for $p\in \Z$.  Then  $H^*: \Sch_k^\square\to \cA^{(\Z)}$ is a relative cohomology such that for a triple $(X,Y)$, $(X,Z)$ and $(Y,Z)$ we have the long exact sequence 
$$\cdots\to H^{p-1}(Y,Z)\longby{\partial^{p-1}}  H^p(X,Y) \to  H^p(X,Z) \to H^p(Y,Z)  \to\cdots $$ and
this long exact sequence is natural \ie  another triple $(X',Y')$, $(X',Z')$ and $(Y',Z')$ together with morphisms $\gamma: (Y,Z)\to (Y',Z')$, $\varphi: (X,Z)\to (X',Z')$ and $\delta: (X,Y)\to (X',Y')$ is inducing the following commutative diagram in $\cA$ 
$$\xymatrix{\cdots \ar[r]^-{} & H^p(X,Y)\ar[r]^-{}   & H^p(X,Z) \ar[r]^-{} & H^p(Y,Z) \ar[r]^-{\partial^p}  &  H^{p+1}(X,Y)\ar[r]^-{}  & \cdots\\
\cdots \ar[r]^-{} & H^p(X',Y')\ar[r]^-{} \ar[u]^-{\delta^p} & H^p(X',Z') \ar[r]^-{} \ar[u]^-{\varphi^p} & H^p(Y',Z') \ar[r]^-{\partial'^p}  \ar[u]^-{\gamma^p}  &  H^{p+1}(X',Y') \ar[r]^-{}  \ar[u]^-{\delta^{p+1}}  & \cdots}$$
Moreover, the following 
$$\cdots\to H^{p-1}(X-Y)\longby{\partial^{p-1}}  H^p_Y(X) \to  H^p(X) \to H^p(X-Y)  \to\cdots $$ 
is exact in $\cA$ and also natural. If $\delta: (X,Y)\to (X',Y')$ is an abstract blow up we have the excision
$\delta^* :H^*(X',Y')\iso H^*(X,Y)$ isomorphism.  For $(H,\kappa, \upsilon)$ satisfying the K\"unneth axiom and pairs $(X,Y), (X',Y')\in Sch_k^\square$  we have 
$$\kappa^{k}_{(X,Y),(X',Y')}: \sum_{i+j=k} H^i(X,Y)\otimes  H^j(X',Y')\iso H^k(X\times X', Y\times X'\cup X\times Y')$$ 
for all $k\in\Z$.
\end{lemma}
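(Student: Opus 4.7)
The plan is to reduce every statement to the image under $H$ of a structural triangle or isomorphism already present in $\DM_\gm$, using the fact that a cohomological functor sends a distinguished triangle $M \to N \to F \longby{+1}$ to a long exact sequence
$$\cdots \to H^p(F) \to H^p(N) \to H^p(M) \to H^{p+1}(F) \to \cdots$$
in $\cA$, with connecting maps induced by the shift.

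First I would invoke \cite[Def.\,1.1]{BVKD}: the relative motive fits in the distinguished triangle $M(Y) \to M(X) \to M(X,Y) \longby{+1}$ and the motive with support in $M^Y(X) \to M(X) \to M(X-Y) \longby{+1}$. An octahedral argument applied to the composite $M(Z) \to M(Y) \to M(X)$ then produces the distinguished triangle of a triple
$$M(Y,Z) \to M(X,Z) \to M(X,Y) \longby{+1}$$
in $\DM_\gm$, and a morphism of triples induces a morphism of this triangle in the standard way. Applying $H$ (indexed by $H^p(M) := H(M[p])$) to these three triangles yields the two stated long exact sequences, while functoriality of $H$ combined with the functoriality in $\DM_\gm$ of each triangle gives the commutative ladders.

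Excision for an abstract blow up $\delta : (X,Y) \to (X',Y')$ reduces to the isomorphism $\delta_* : M(X,Y) \iso M(X',Y')$ in $\DM_\gm$. This follows from the abstract blow up distinguished triangle $M(Y) \to M(X) \oplus M(Y') \to M(X') \longby{+1}$ (the motivic avatar of the blow up exact sequence displayed just above this lemma) combined with the defining triangles of the two relative motives, by comparing cones; applying $H$ then produces $\delta^* : H^*(X',Y') \iso H^*(X,Y)$. For the K\"unneth formula for pairs, the essential input is the tensor identification
$$M(X,Y) \otimes M(X',Y') \cong M\bigl(X \times X',\; Y \times X' \cup X \times Y'\bigr)$$
in $\DM_\gm$, obtained by tensoring the defining triangles of $M(X,Y)$ and $M(X',Y')$, using $M(X) \otimes M(X') \cong M(X \times X')$ and the fact that the tensor product in a $\otimes$-triangulated category preserves distinguished triangles in each variable. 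The claimed K\"unneth decomposition of $H^k$ then follows by applying the K\"unneth axiom (Definition \ref{kunpdf}) to this identification.

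The main technical point I expect to encounter is the octahedral construction of the triangle of a triple together with the verification of its naturality in morphisms of triples; both are standard but require care. The remaining assertions are formal consequences of applying $H$ to pre-existing structure in $\DM_\gm$.
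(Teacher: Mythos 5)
Your proposal is correct and takes essentially the same route as the paper's (much terser) proof: everything is read off from the distinguished triangles for pairs, triples and abstract blow-ups in $\DM_\gm$ and from the tensor identification $M(X,Y)\otimes M(X',Y')\cong M(X\times X', Y\times X'\cup X\times Y')$, which the paper simply cites from \cite{BVKD} while you sketch its derivation. One small correction: to obtain the stated exact sequence $H^p_Y(X)\to H^p(X)\to H^p(X-Y)$ from a contravariant homological functor you need the Gysin triangle oriented as $M(X-Y)\to M(X)\to M^Y(X)\longby{+1}$, not $M^Y(X)\to M(X)\to M(X-Y)\longby{+1}$ as you wrote.
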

\begin{proof} By the construction in \cite{BVKD} the usual distinguished triangles associated with pairs and triples yield the corresponding long exact sequences in $\cA$. For an abstract blow up we have that $M(X,Y)\cong M(X',Y')$ depends on the open complement only. Since $M(X,Y)\otimes M(X',Y')\cong M(X\times X', Y\times X'\cup X\times Y')$ we obtain the claimed relative K\"unneth formula.
\end{proof}
\begin{remark} \label{weight}
Recall that Bondarko \cite{BO} provided $\DM_\gm$  of a weight structure such that $$w_{\leq i}M\to M \to w_{\geq i+1}M\longby{+1}$$ is a distinguished triangle for all $M\in\DM_\gm$. The interested reader can also be inspecting the previous facts with respect to Bondarko's weights $$W_iH^p(X,Y)\subseteq H^p(X,Y)$$ recalling that for $M \in\DM_\gm$ we have Bondarko's Chow weight filtration
$$W_iH^p(M) := \im(H(w_{\geq i}M[p]) \to H(M[p]))$$
and $W_{i-1}H^p\into W_iH^p$ are well-defined subfunctors of $H^p$ such that
$\gr^W_iH^p(M)=\ker H^{-i}(N)\to H^{-i}(N')$ for $N\to N'\in\cM_\rat$ (see \cite[Prop.\,6.1.2]{HMN} and \cite[\S 2]{BO}).
\end{remark}

\section{Mixed Weil cohomologies}

\subsection{Dualities and regulators} Let  $(\cA, L)$ be $\otimes$-abelian $(\cA, \otimes, \un)$ together with a Lefschetz object $L$, \ie an invertible object of $\cA$. For $A\in\cA$ and $q\in\Z$ we shall denote $A(q):= A \otimes T^{\otimes q}$ where $T:= L^{-1}$ is the corresponding Tate object  (compare with \cite[\S 4.2]{BVKW}). For a cohomological functor $H$ with values in $\cA$ and $(X,Y)\in\Sch_k^\square$ set
$$H^{p, q}(X, Y):= H^p(X,Y)(q)\hspace{0.2cm}\text{and}\hspace{0.2cm} H_{p,q}(X):= H_p(X)(-q)$$ for all $p, q\in \Z$. Remark that we can also define cohomology with compact support $H^{p, q}_c(X):= H^{p}(M^c(X))(q)$ and the usual homology $H_{p, q}^{\rm S}(X):= H^{-p}(M(X)^\star)(-q)$ coinciding with $H^{p, q}(X)$ and $H_{p, q}(X)$, respectively, if $X$ is proper.
\begin{defn}[\protect{Trace axiom}] \label{trace}
A cohomological functor $H$ with values in $\cA$ together with a Lefschetz object $L$ and an additional morphism $\Tr: H^2(\Z(1))\to L$ in $\cA$ is denoted $(H,\Tr)$, is said to have a trace and take values in $(\cA, L)$.

Say that  $(H,\Tr)$ with values in $(\cA, L)$ satisfies the \emph{trace} axiom if $\Tr : H^2(\Z(1))\iso L$ is an isomorphism, $H^i(\Z(1))=0$ for $i\neq 2$ and $\pi^0 : H^{0}(\Z)\iso H^{0}(X)$ 
is an isomorphism if $X$ is geometrically connected, where the canonical morphism $\pi^0$ is induced by the structural morphism $\pi: X\to \Spec (k)$.
\end{defn}
The following notion of mixed Weil cohomology is modelled on the one hinted in Andr\'e's book \cite[\S 14.2.4.]{AM}.
\begin{defn}[Mixed Weil cohomology]  \label{mixweil}
Let  $(\cA, L)$ be a $\otimes$-abelian category together with a Lefschetz object.
A cohomological functor 
$$H:\DM_\gm^\op\to \cA$$ is a \emph{mixed Weil cohomology} with values in $(\cA, L)$ if it is endowed with an external product and a trace $(H, \kappa, \upsilon, \Tr)$ such that $H^*$ is effectively bounded and satisfies K\"unneth, point and trace axioms (see Definitions \ref{cohext}, \ref{bound}, \ref{kunpdf} \& \ref{trace}). We shall write $(\cA, H)$ for a mixed Weil cohomology taking values in $(\cA, L)$ without making explicit all the data if not needed. 
 \end{defn}
\begin{lemma}[\protect{Duality}] \label{dual}
Let $(\cA, H)$ be a mixed Weil cohomology. For $X$ smooth and $p, q\in\Z$ we have the duality isomorphism
$$H^{p, q}(X)\cong H_{2d-p,d-q}(X)$$
where $d=\dim (X)$ for $X$ equidimensional; if $(X,Y)\in\Sch_k^\square$ is such that $X$ is proper and $X-Y$ is smooth we then have the isomorphism
$$H^{p,q}(X,Y)^\vee\cong H^{2d-p, d-q}(X-Y)$$
and for $X$ smooth and proper, $(X, Z)\in\Sch_k^\square$ such that $Y\cap Z =\emptyset$, we have the relative duality isomorphism
$$H^{p, q}(X-Z,Y)^\vee\cong H^{2d-p, d-q}(X-Y, Z)$$
for any $p, q\in\Z$.
\end{lemma}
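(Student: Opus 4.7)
The plan is to derive each isomorphism from the corresponding duality in $\DM_\gm$ and then transport it to $\cA$ via the strong tensor functor $H^*:\DM_\gm^\op\to\cA^{(\Z)}$ (Lemma~\ref{kunp}). Since $\cA^{(\Z)}$ is rigid and $H^*$ lands in the rigid subcategory $\cA_\rig$, one has, for every $M\in\DM_\gm$,
\[
H^n(M(q)[k])\cong H^{n-k}(M)(-q)\qquad\text{and}\qquad H^n(M^\star)\cong H^{-n}(M)^\vee,
\]
the first by the K\"unneth and point axioms and the second because the graded dual in $\cA^{(\Z)}$ is $(G^\star)_n=(G_{-n})^\vee$.

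For $X$ smooth equidimensional of dimension $d$, Voevodsky's motivic Poincar\'e duality (\S\ref{1.1}) gives $M^c(X)^\star\cong M(X)(-d)[-2d]$, whence $H_p(X)=H^{-p}(M^c(X)^\star)\cong H^{2d-p}(X)(d)$, which rearranges to $H^{p,q}(X)\cong H_{2d-p,d-q}(X)$. For the pair duality with $X$ proper and $U=X-Y$ smooth, $Y$ is closed in the proper $X$ and hence itself proper, so $M(X)=M^c(X)$ and $M(Y)=M^c(Y)$; comparing the defining triangles $M(Y)\to M(X)\to M(X,Y)\to$ and $M^c(Y)\to M^c(X)\to M^c(U)\to$ yields $M(X,Y)\cong M^c(U)$. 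The smooth case applied to $U$ then gives $M(X,Y)^\star\cong M(U)(-d)[-2d]$, and $H^p(X,Y)^\vee\cong H^{-p}(M(X,Y)^\star)\cong H^{2d-p}(U)(d)$; twisting by $(-q)$ produces the second duality.

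For the relative case I would establish the motivic isomorphism $M(X-Z,Y)^\star\cong M(X-Y,Z)(-d)[-2d]$ as follows. Apply the octahedral axiom to the composition $M(Y)\to M(X-Z)\to M(X)$: its three cofibers are $M(X-Z,Y)$, the open--closed cofiber $M^Z(X)=\cone(M(X-Z)\to M(X))$, and $M(X,Y)$, so the octahedron yields a distinguished triangle
\[
M(X-Z,Y)\to M(X,Y)\to M^Z(X)\to M(X-Z,Y)[1].
\]
Separately, dualizing the open--closed triangle $M(X-Z)\to M(X)\to M^Z(X)\to$ and substituting $M(X)^\star\cong M(X)(-d)[-2d]$ (smooth proper) together with $M(X-Z)^\star\cong M(X,Z)(-d)[-2d]$ (from the pair case), and comparing with the $(-d)[-2d]$-twist of $M(Z)\to M(X)\to M(X,Z)\to$, produces $M^Z(X)^\star\cong M(Z)(-d)[-2d]$. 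Dualizing the octahedral triangle and substituting both Poincar\'e-duality identifications then gives a distinguished triangle
\[
M(Z)(-d)[-2d]\to M(X-Y)(-d)[-2d]\to M(X-Z,Y)^\star\to M(Z)(-d)[-2d+1],
\]
to be matched against the $(-d)[-2d]$-twist of $M(Z)\to M(X-Y)\to M(X-Y,Z)\to M(Z)[1]$; uniqueness of the third term in a distinguished triangle (axiom TR3) then identifies $M(X-Z,Y)^\star\cong M(X-Y,Z)(-d)[-2d]$, and applying $H^{-p}$ and twisting yields the relative duality.

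The main obstacle is the naturality check required to invoke TR3 in the last step: one must verify, under the Poincar\'e-duality isomorphisms for $M^Z(X)$ and $M(X,Y)$, that the leftmost map $M(Z)(-d)[-2d]\to M(X-Y)(-d)[-2d]$ obtained by dualizing the octahedral map $M(X,Y)\to M^Z(X)$ agrees with the $(-d)[-2d]$-twist of the inclusion-induced map $M(Z)\to M(X-Y)$ coming from $Z\hookrightarrow X-Y$ (which makes sense because $Y\cap Z=\emptyset$). Once this compatibility is granted, the three parts of the lemma follow uniformly from the formal identities at the start and the motivic dualities established by Voevodsky, by triangle comparison, and by the octahedral argument.
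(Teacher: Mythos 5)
Your proof is correct in substance and, for the first two isomorphisms, follows exactly the paper's route: transport the motivic identities $M^c(X)^\star\cong M(X)(-d)[-2d]$ and $M(X,Y)\cong M^c(X-Y)\cong M(X-Y)^\star(d)[2d]$ through the strong tensor functor $H^*$ and read off degrees and twists (your triangle comparison showing $M(X,Y)\cong M^c(X-Y)$ when $X$ and $Y$ are proper is just a spelled-out version of what the paper asserts). The genuine difference is in the third isomorphism: the paper treats the motivic relative duality $M(X-Z,Y)\cong M(X-Y,Z)^\star(d)[2d]$ as a black box, citing \cite{BVKD} where it is established, and then simply applies $H^*$; you instead reprove that motivic statement via the octahedron on $M(Y)\to M(X-Z)\to M(X)$, the Gysin-type identification $M^Z(X)^\star\cong M(Z)(-d)[-2d]$, and a cone comparison. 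This is essentially the argument of the cited note itself, so your route buys self-containedness at the cost of the compatibility check you honestly flag --- verifying that the dualized connecting maps agree with the inclusion-induced maps under the Poincar\'e-duality identifications. That check is the real content of \cite{BVKD} and is not automatic (cones are only unique up to non-unique isomorphism, so TR3 alone does not give you a \emph{canonical} identification), but it is standard and does go through; within the logic of this paper, where the motivic input is imported by reference, your extra work is not needed but is not wrong. One small caution: fix a single shift convention ($H^p(M)=H(M[p])$ versus $H(M[-p])$) and use it consistently when locating the degree in which $H^*(\Z(q)[2q])$ is concentrated; the bookkeeping in your displayed identities is internally consistent, but it should be stated as a convention rather than derived.
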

\begin{proof} For $X$ smooth we have $M^c(X)^\star\cong M(X)\otimes \Z(-d)[-2d]$ and applying $H^*$ we obtain the graded isomorphism 
$H^*(M^c(X)^\star)\cong H^*(M(X)) \otimes H^*(\Z(-d)[-2d])$ where $H^*(\Z(-d)[-2d])$ is $\un (d)$ concentrated in degree $-2d$; this is  providing the claimed duality isomorphism after twisting by $(q-d)$, since $*-2d =-p$ for $*=2d-p$. For $X$ proper and $X-Y$ smooth we have $M(X, Y)\cong M^c(X-Y)\cong M(X- Y)^\star(d)[2d]$ so that $H^*(X,Y)\cong H^*(X- Y)^\vee\otimes H^*(\Z(d)[2d])$ where now $H^*(\Z(d)[2d])$ is $\un (-d)$ concentrated in degree $2d$ thus $(H^{p}(X,Y)(q))^\vee\cong H^{2d-p}(X- Y)(d-q)$ in this case. Finally, for $X$ smooth and proper, $Y,Z\subseteq X$ two disjoint closed subsets we have $M(X-Z,Y)\cong M(X-Y,Z)^\star (d)[2d]$ whence $H^*(X-Z,Y)\cong H^*(X-Y, Z)^\vee\otimes H^*(\Z(d)[2d])$ providing the claimed relative duality isomorphism (see \cite{BVKD}).
\end{proof}
\begin{remark} \label{reldual}
If $X$ is smooth and proper, $d=\dim (X)$, $Y$ and $Z$ are two normal crossing divisors such that the union is also a normal crossing divisor then a more general relative duality holds true
$M(X-Z,Y-Y\cap Z) = M(X-Y,Z-Y\cap Z)^\star (d)[2d]$ as a consequence of the motivic formalism of six functors (as suggested by J. Ayoub). Therefore also $$H^{p, q}(X-Z,Y-Y\cap Z)^\vee\cong H^{2d-p, d-q}(X-Y, Z-Y\cap Z)$$
is available for any $H$ mixed Weil cohomology.
\end{remark}
\begin{lemma}[\protect{Regulators}]\label{reg}
For a mixed Weil cohomology $(\cA, H)$ and any $X\in\Sch_k$ we get regulator maps
$$r^{p, q}_H: H^{p,q}_{\rm M}(X)\to \cA (\un,H^{p, q}(X))\hspace{0.4cm}\text{and}\hspace{0.4cm} r_{p, q}^H: H_{p,q}^{\rm BM}(X)\to \cA (\un,H_{p, q}(X)).$$  
For $X\in\Sch_k$ equidimensional of dimension $d$ and $0\leq q\leq d$, there is a higher cycle class map from Bloch's higher Chow groups
$$c\ell_{p, q}^H: CH^{d-q}(X, p-2q)_\Q\to \cA (\un,H_{p, q}(X))$$  
induced by $r_{p, q}^H$  and for $X$ smooth there is a cycle class map
$$c\ell^{p, q}_H : CH^{q}(X, 2q-p)_\Q\to\cA (\un,H^{p, q}(X))$$
induced by $r^{p, q}_H$ such that $c\ell^{2d-p, d- q}_H\cong c\ell_{p, q}^H$ under duality. 
\end{lemma}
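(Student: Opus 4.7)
The plan is to build the motivic regulator $r^{p,q}_H$ directly from the functoriality of $H$ together with the strong tensor structure of Lemma~\ref{kunp}. Given a class $\alpha \in \DM_\gm(M(X), \Z(q)[p]) = H^{p,q}_{\rm M}(X)$, I apply the contravariant functor $H$ to obtain $H(\alpha): H(\Z(q)[p]) \to H(M(X))$ in $\cA$. Since the K\"unneth, point and trace axioms hold, $(H^*, \kappa^*, \upsilon^*)$ is a strong graded tensor functor by Lemma~\ref{kunp}; iterating K\"unneth from $H^2(\Z(1)) \cong L$ gives $H^*(\Z(q)) \cong L^{\otimes q} = \un(-q)$ concentrated in a single cohomological degree, hence $H^*(\Z(q)[p])$ is also $\un(-q)$ concentrated in an appropriate shifted degree. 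Reading off this graded component and twisting by $(q)$ produces the desired morphism $\un \to H^p(X)(q) = H^{p,q}(X)$, defining $r^{p,q}_H(\alpha)$; $\Q$-linearity of $H$ makes the assignment $\Q$-linear.

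The Borel-Moore regulator $r_{p,q}^H$ is constructed dually. Given $\beta \in \DM_\gm(\Z(q)[p], M^c(X)) = H_{p,q}^{\rm BM}(X)$, I apply $H$ to get $H(\beta): H(M^c(X)) \to H(\Z(q)[p])$. Since $\DM_\gm$ is rigid and Lemma~\ref{kunp} ensures that $H^*$ lands in the rigid subcategory $\cA_\rig$, I dualise in $\cA$, using the strong tensor structure to identify $H^*(M^c(X)^\star)$ with $H^*(M^c(X))^\vee$, thereby obtaining the desired $\un \to H^{-p}(M^c(X)^\star)(-q) = H_{p,q}(X)$. Equivalently, one may use rigidity of $\DM_\gm$ to rewrite $\beta$ as $\beta': \un \to M^c(X)(-q)[-p]$; applying $H$ and pairing with $H(\un) \cong \un$, followed by dualisation in $\cA_\rig$, yields the same morphism.

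The cycle class maps then arise by pre-composing with the standard identifications of motivic cohomology and Borel-Moore motivic homology with Bloch's higher Chow groups: for $X \in \Sm_k$, $H^{p,q}_{\rm M}(X) \cong CH^q(X, 2q-p)_\Q$, and for $X \in \Sch_k$ equidimensional of dimension $d$ with $0\leq q\leq d$, $H_{p,q}^{\rm BM}(X) \cong CH^{d-q}(X, p-2q)_\Q$ (see \eg \cite[\S 19]{VL} and \cite[\S 16.2.5]{AM}). Composition with $r^{p,q}_H$ and $r_{p,q}^H$ respectively defines $c\ell^{p,q}_H$ and $c\ell_{p,q}^H$.

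For the duality compatibility $c\ell^{2d-p, d-q}_H \cong c\ell_{p,q}^H$ on $X$ smooth equidimensional of dimension $d$, both cycle class maps share the same source $CH^{d-q}(X, p-2q)_\Q$ (since $2(d-q)-(2d-p) = p-2q$), and Lemma~\ref{dual} supplies the target isomorphism $H^{2d-p, d-q}(X) \cong H_{p,q}(X)$ via the Poincar\'e duality $M^c(X)^\star \cong M(X)(-d)[-2d]$ in $\DM_\gm$. Since both cycle classes are produced by applying the functor $H$ to this duality isomorphism, they agree under the identification, by functoriality. The main technical point is purely bookkeeping: tracking cohomological degrees and Tate twists under the strong graded tensor structure of $H^*$ in order to extract the correct component of $H^*(\Z(q)[p])$ representing the regulator; once that is in place, compatibility with higher Chow groups and with duality is formal from the functoriality of $H$ and Lemma~\ref{dual}.
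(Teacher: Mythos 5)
Your construction is essentially the paper's own proof: apply the contravariant graded functor $H^*$ to the Hom-sets defining $H^{p,q}_{\rm M}$ and $H^{\rm BM}_{p,q}$, use that Künneth, point and trace force $H^*(\Z(q)[2q])$ to be the invertible object $\un(-q)$ concentrated in a single degree so the graded Hom collapses to a single Hom out of $\un$ after twisting, then compose with the standard higher-Chow identifications and invoke Lemma \ref{dual} for the duality compatibility. The only cosmetic difference is that on the Borel--Moore side the paper dualises in $\DM_\gm$ first (rewriting the class as a map out of $M^c(X)^\star$) whereas you also allow dualising in $\cA_{\rig}$ after applying $H$; these agree by the strong monoidality of $H^*$, so the argument is correct as it stands.
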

\begin{proof} Write $p=2q+r$. The map $r^{p, q}_H$ is the mapping on Hom sets 
$$ H^{p,q}_M(X)=  \DM_\gm (M(X)[-r], \Z (q)[2q])\to \cA^{(\Z)} (H^{*}(\Z(q)[2q]), H^{*-r}(X))$$
given by applying $H^*$ since $H^*(\Z(q)[2q])$ is $\un (-q)$ concentrated in degree $2q$ as a graded object, the target is actually equal to
$ \cA (\un (-q) , H^{p}(X))$ since $*-r=2q$ for $*=p$. Similarly, for $r_{p, q}^H$ we have 
$$H_{p,q}^{\rm BM} (X)=\DM_\gm (M^c(X)^\star [r], \Z(-q)[-2q])\to \cA^{(\Z)} (H^*(\Z(-q)[-2q]), H^{*+r}(M^c(X)^\star))$$
where now $H^*(\Z(-q)[-2q])$ is $\un (q)$ in degree $-2q$ whence $*+r=-2q$ for $*=-p$.

Under the hypotheses of equidensionality, we also have $H_{p,q}^{\rm BM} (X)\cong CH^{d-q} (X,p-2q)$ so that we also get the claimed cycle map $c\ell_{p, q}^H$. 
For $X$ smooth, $c\ell^{p, q}_H$ is induced by $r^{p, q}_H$ under the identification $H^{p, q}_{\rm M}(X)\cong CH^{q}(X, 2q-p)_\Q$ and then apply Lemma \ref{dual} for the claimed compatibility. 
\end{proof}
For a mixed Weil cohomology $(\cA, H)$ the restriction $H\iota$ along the functor $\iota$ in \S \ref{1.1} of the cohomological functor $H$ yields a covariant functor 
  $$(H\iota)^*: \cM_\rat \to \cA^{(\Z)}$$ 
which is also a strong tensor functor: in Definition \ref{mixweil} the point axiom (Definition \ref{kunpdf}) corresponds to the unitality and the K\"unneth axiom (Definition \ref{kunpdf}) is strong monoidality as explained in Lemma \ref{kunp}. Moreover, we have that $(H\iota)^*$ sends $\cM_\rat^\eff$ to $\cA^{(\N)}$  via $\iota^\eff$ by effectivity (Definition \ref{bound}); finally, we also have $\Tr: H^2(\iota\L) \cong H^2(\P^1_k) \iso L$, $H^0(\iota h(X))\iso H^0(\iota h(\Spec (k)))$ if $X\in\SmProj_k$ is geometrically connected and $(H\iota)^*(\L)$ is concentrated in degree $2$ by the trace axiom (Definition \ref{trace}).

Refering to \cite{BVKW} for the notion of (generalised) Weil cohomology we let $(\cW_\ab, W_\ab)$ be the (abelian valued) universal Weil cohomology constructed in \cite[Thm.\,5.2.1 \& Cor.\,5.2.2]{BVKW}.
\begin{lemma} \label{weil}
 Let $(\cA, H)$ be a mixed Weil cohomology. The restriction of $H$ via $\iota$ yields a Weil cohomology $(\cA, H\iota)$ with cycle map 
      $$c\ell^{q}_H := c\ell^{2q, q}_H : CH^{q}(X)_\Q\to\cA (\un,H^{2q}(X)(q))$$
    given by Lemma \ref{reg}, for $q\in \N$ and $X\in\SmProj_k$.  Therefore, there is an exact strong $\otimes$-functor 
    $$\Phi_H: \cW_\ab\to \cA$$ such that the named restriction $(\cA, H\iota)$ is the push-forward of $(\cW_\ab, W_\ab)$ along $\Phi_H$.
\end{lemma}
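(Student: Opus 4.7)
The plan is to combine the structural properties of $H\iota$ already collected just above the lemma statement with the universal property of $(\cW_\ab, W_\ab)$ proved in \cite[Thm.\,5.2.1 \& Cor.\,5.2.2]{BVKW}. In detail, I would first check that $(\cA, H\iota)$ meets the axioms of a (generalised abelian) Weil cohomology in the sense of \cite[Prop.\,4.4.1]{BVKW}. Effectivity ($H\iota$ sends $\cM_\rat^\eff$ into $\cA^{(\N)}$) is the effectivity part of Definition \ref{bound}; strong tensoriality of $(H\iota)^*$ is exactly the K\"unneth axiom combined with the point axiom, via Lemma \ref{kunp}; the trace isomorphism $\Tr:H^2(\iota\L)\iso L$ and the normalisation $H^0(\iota h(X))\iso H^0(\iota h(\Spec k))$ for $X\in\SmProj_k$ geometrically connected come directly from the trace axiom (Definition \ref{trace}) applied to $\iota\L=\Z(1)[2]$ and $M(X)=\iota h(X)$ respectively. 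Finally, concentration of $(H\iota)^*(\L)$ in degree $2$ follows from $H^i(\Z(1))=0$ for $i\neq 2$.

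Next I would produce the cycle class map. Specialising Lemma \ref{reg} to $X\in\SmProj_k$ with $p=2q$ gives
$$c\ell^{2q,q}_H:CH^{q}(X,0)_\Q\to \cA(\un, H^{2q}(X)(q)),$$
and since $CH^{q}(X,0)_\Q=CH^{q}(X)_\Q$, this is the desired $c\ell^{q}_H$. The naturality of $c\ell^{q}_H$ in $X$, its multiplicativity, and its compatibility with the trace all descend from the corresponding properties of Voevodsky's motivic cohomology under the identification $H^{2q,q}_{\rm M}(X)\cong CH^q(X)_\Q$ for $X$ smooth, together with the fact that $r^{2q,q}_H$ is induced by the strong tensor functor $H^*$.

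Having verified that $(\cA, H\iota)$ together with the cycle classes $c\ell^{q}_H$ is an abelian-valued Weil cohomology on $\SmProj_k$, I would appeal to the universal property of $(\cW_\ab, W_\ab)$: there exists a unique exact strong $\otimes$-functor $\Phi_H:\cW_\ab\to\cA$ such that $\Phi_H\circ W_\ab^{*}\cong (H\iota)^{*}$ compatibly with Lefschetz objects, traces and cycle classes, which is precisely the statement that $(\cA, H\iota)$ is the pushforward of $(\cW_\ab, W_\ab)$ along $\Phi_H$.

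The only delicate point, and the one I would write out carefully, is checking that the cycle class map $c\ell^q_H$ arising abstractly from Lemma \ref{reg} satisfies the full list of compatibilities demanded in \cite[Prop.\,4.4.1]{BVKW} (naturality, ring structure, normalisation on the fundamental class, and matching with the trace on $\L$). These are all formal, since $c\ell^{q}_H$ is essentially the image under $H^*$ of the defining morphism of a cycle in $\DM_\gm$, and the monoidality and trace conditions needed are exactly the K\"unneth, point and trace axioms built into Definition \ref{mixweil}. Once these checks are in place, the universal property of $\cW_\ab$ delivers $\Phi_H$ at once, and its exactness is automatic since the source $\cW_\ab$ is constructed so that every such classifying functor to a $\otimes$-abelian target is exact (\cite[Cor.\,5.2.2]{BVKW}).
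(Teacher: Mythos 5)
Your proposal is correct and takes essentially the same route as the paper: the paper's proof likewise consists of checking that $(\cA, H\iota)$ satisfies the axioms of \cite[Prop.\,4.4.1]{BVKW} (these verifications are exactly the ones recorded in the paragraph preceding the lemma, including the cycle map from Lemma \ref{reg} with $p=2q$) and then invoking \cite[Thm.\,5.2.1]{BVKW} to obtain the classifying functor $\Phi_H$. Your more detailed discussion of the cycle-class compatibilities is simply a fuller write-up of the same argument.
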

\begin{proof}
    Just follows from \cite{BVKW}: apply Proposition 4.4.1 in  \loccit to $(\cA, H\iota)$ to get a Weil cohomology (in the sense of Definition 4.2.1) and then Theorem 5.2.1 to get the classifying functor $\Phi_H$ as claimed.
\end{proof}
\begin{example} \label{ex}
Let $(\cT\otimes, \un)$ be rigid $\otimes$-triangulated together with $(\cT^{\leq 0},\cT^{\geq 0})$ a $t$-structure. Let $\cA:=\cT^\heartsuit$ denotes the heart of $\cT$ and the $p$-th homology functor $H^p_t(C):= \tau_{\leq 0}\tau_{\geq 0}(C[p])$ of the heart. Assume that the $t$-structure is bounded and conservative, in the sense that $H^*_t: \cT\to \cA^{(\Z)}$ and that the zero object is the only object $T\in\cT$  such that 
$H^*_t(T)=0$. Assume that $\cA \otimes \cA\subseteq \cA$, or rather the tensor structure is compatible with the  $t$-structure, in such a way that the heart $(\cA, \otimes, \un)$ is $\otimes$-abelian and rigid. For example, $\cT:= D^b(\cA)$ for $\cA\in\Ex^\rig$. Let $R$ be a triangulated functor $$R:\DM_\gm\to \cT$$ that is also symmetric (strong) monoidal functor. Set $$R\Gamma(M):= R(M)^\vee= R(M^\star)$$ and denote the $i$-homologies $H^i_R := H^i_tR\Gamma$. Then $H^*_R$ provide a bounded cohomological functor endowed with an external product
$$H^*_R: \DM_\gm^\op \to \cA^{(\Z)}$$ 
 for which we also have that the K\"unneth formula holds (Definition \ref{kunpdf})
 $$\sum_{i+j=k} H^i_R(M)\otimes  H^j_R(N) \iso H^k_R(M\otimes N)$$
 since 
 $$\kappa^{k}_{M,N}: \sum_{i+j=k} H^i_t(R(M^\star))\otimes  H^j_t(R(N^\star))\iso H^k_t(R(M^\star)\otimes  R(N^\star)) \iso H^k_t(R((M\otimes N)^\star))$$
 where the first iso is given by the K\"unneth formula in \cite[Def.\,3.2, Thm.\,4.1 \& Cor.\,4.4]{BT} and the second is given by the strong monoidality of $R$. Now we also have that $\un\iso H^0_R(\Z)$ by the unitality of $R$ and $H^i_R(\Z) =0$ for $i\neq 0$ so that $(\cA, H_R)$ satisfies the K\"unneth and point axioms (Definition \ref{bound}).  Now pick $L\in\cA$ a Lefschetz object. Here $\un (q) = T^{\otimes q}$ where $T:= L^{-1}=L^\vee$ and $C\leadsto C(1):= C\otimes T$ shall be an auto-equivalence of $\cT$ which is compatible with the $t$-structure. Finally, for the trace axiom (Definition \ref{trace}), we should have $H^0_R(\Z)\iso H^0_R(X)$ for $X$ geometrically connected, $\Tr : H^2_R(\Z (1))\iso L=\un (-1)$ induced by $\Tr_R : R (\Z (1))\to L[-2]$ such that $R (\Z (q)[2q])\cong \un (q)$ for all $q\in \Z$ and $H^*_R (\Z (q)[2q])= \un (-q)$. If we also have that $H^i_R(X)= 0$ for $i\notin [0,2d]$, $X\in \Sm_k$ $d= \dim (X)$ we thus obtain that $(\cA, H_R)$ is effectively bounded (Definition \ref{bound}). 
 
 This is the case of Cisinsky-D\'eglise realisation functors for mixed Weil cohomologies in \cite[Thm.\,3 \& Thm.\,2.7.14]{CD} where $\cT= D^b(\cA)$ and $\cA$ is the category of finitely generated $K$-modules for $K$ a field of zero characteristic or, more generally, for $K$ an absolutely flat $\Q$-algebra as for Ayoub's new Weil cohomology, see \cite[Ex.\,3.2]{BV} and \cite{AW}. In particular, for classical Weil cohomologies such as $\ell$-adic, Betti and de Rham cohomology we have such a realisation functors, $R_\ell$, $R_{\rm Betti}$ and $R_{\rm dR}$. In characteristic zero, we also have Huber's enrichment of these classical realisations given by the mixed realisation $R_{\rm MR}$ \cite[Thm.\,6.3.15]{HMN} as well as $R_{\rm dR-Betti}$ the de Rham-Betti realisation, see \cite[Rmk.\,6.3.4]{HMN}. The corresponding regulators are those considered in \cite{ABVB}.
\end{example}

\subsection{Representability} For $(H, \kappa, \upsilon, \Tr)$ and $(H', \kappa', \upsilon', \Tr')$ mixed Weil cohomologies taking values in $(\cA, L)$, a morphism $\nu : (H, \kappa, \upsilon, \Tr) \to (H', \kappa', \upsilon', \Tr')$ is a monoidal natural transformation $\nu:H\to H'$ which is also compatible with $\Tr$ and $\Tr'$. We denote by $\MW (\cA, L)$ the category of mixed Weil cohomologies with values in $(\cA, L)$.

Let $\Ex_*^\rig$ be the 2-category of $\otimes$-abelian rigid $\Q$-linear pointed categories $(\cA, L)$, by a choice of a Lefschetz object, and exact strong tensor functors preserving the Lefschetz objects (as in \cite[Def.\,5.1.1]{BVKW}). Pushing forward along $F: (\cA, L)\to (\cA', L')$ in $\Ex_*^\rig$ we get
$$F_*:\MW (\cA, L)\to \MW (\cA', L')$$
where $F_*(H):=  FH$. We can make up a strong 2-functor
$$\MW(-) : \Ex_*^\rig \to \Cat$$
sending $(\cA, L)$ to  $\MW (\cA, L)$.
We have:
\begin{thm}[Universal mixed Weil cohomology] \label{mixed}
The 2-functor $\MW(-)$ is 2-representable, \ie  there is a cohomological functor
$$MW:\DM_\gm^\op\to \cMW$$ 
which is a mixed Weil cohomology $(\cMW, MW)$ such that for any mixed Weil cohomology $(\cA, H)$ there exists an exact strong $\otimes$-functor $$\Psi_H:\cMW\to \cA$$ such that $(\Psi_H)_*(MW)\cong H$. Moreover, the functor $\Psi_H$ is the unique extension of $\Phi_H$ (Lemma \ref{weil}) via $\Phi: \cW_\ab\to \cMW$ induced by the Weil cohomology $(\cMW, MW\iota)$ given by the restriction of $MW$ via $\iota$ as depicted in the following diagram
$$\xymatrix{\cW_\ab\ar[r]^{\Phi_H} \ar[d]_{\Phi} & \cA\\
\cMW\ar[ur]_{\Psi_H} & }$$
which is commutative up to (unique) natural isomorphism. Finally, the regulator $r^{p,q}_H$ (Lemma \ref{reg}) factors through the universal regulator $r^{p,q}_{MW}$ via ${\Psi_H}$.
\end{thm}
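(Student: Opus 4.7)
The plan is to construct $\cMW$ by a universal-quotient procedure paralleling \cite[Thm.\,5.2.1]{BVKW}. Start from the Freyd abelianisation $\cA(\DM_\gm^\op)$, the universal $\Q$-linear abelian recipient of a cohomological functor $h : \DM_\gm^\op \to \cA(\DM_\gm^\op)$ (finitely presented contravariant additive $\Q$-linear functors on $\DM_\gm$). Transport the symmetric monoidal structure of $\DM_\gm$ via Day convolution, passing to a $\otimes$-exact quotient using \cite[Prop.\,4.5]{BVK} if necessary so that the result lies in $\Ex^\otimes$. Now form the Serre tensor ideal $\cS \subseteq \cA(\DM_\gm^\op)$ generated by all obstructions to the axioms of Definition \ref{mixweil}: kernels and cokernels of the candidate K\"unneth maps $\kappa^k_{M,N}$ of Lemma \ref{kunp}, of the unit $\upsilon$ and of the trace $\Tr$, together with $h^i(\Z)$ for $i\neq 0$, $h^i(\Z(1))$ for $i\neq 2$, and $h^p(X)$ for $X\in \Sm_k$ equidimensional of dimension $d$ and $p\notin [0,2d]$. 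Set $\cMW := \cA(\DM_\gm^\op)/\cS$, let $MW$ be the induced cohomological functor and declare $L := MW^2(\Z(1))$ to be the tautological Lefschetz object. By \cite[Prop.\,4.5]{BVK} the quotient is $\otimes$-abelian; by construction $(\cMW, MW)$ satisfies K\"unneth, point, trace and effective boundedness, hence is a mixed Weil cohomology. Rigidity of $\cMW$ follows from Lemma \ref{kunp}, which guarantees that the image of $MW$ is dualisable, combined with the fact that $\cMW$ is tensor-generated by this image.

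For the universal property, given $(\cA, H)$ a mixed Weil cohomology, the universal property of the Freyd abelianisation yields a unique exact functor $\tilde\Psi_H : \cA(\DM_\gm^\op) \to \cA$ with $\tilde\Psi_H \circ h \cong H$. The axioms of Definition \ref{mixweil} satisfied by $H$ force $\tilde\Psi_H$ to annihilate every generator of $\cS$, so it descends to an exact functor $\Psi_H : \cMW \to \cA$ with $(\Psi_H)_*(MW) \cong H$; strong monoidality is equivalent to the K\"unneth axiom via Lemma \ref{kunp}, and compatibility with the Lefschetz objects is built in. Uniqueness of $\Psi_H$ follows because $\cMW$ is $\otimes$-generated (as an abelian category) by the image of $MW$. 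For the commutative triangle, apply Lemma \ref{weil} to $(\cMW, MW\iota)$ to obtain an exact strong tensor functor $\Phi:\cW_\ab \to \cMW$ with $\Phi_*W_\ab \cong MW\iota$; then $\Psi_H \Phi$ is a classifying functor for the Weil cohomology $H\iota$, so by the uniqueness clause of \cite[Thm.\,5.2.1]{BVKW} we have $\Psi_H \Phi \cong \Phi_H$. Finally, the regulator factorisation $r^{p,q}_H = \Psi_H \circ r^{p,q}_{MW}$ is immediate from Lemma \ref{reg}, since both regulators are defined by applying the respective cohomological functor on Hom sets and $H^* \cong \Psi_H \circ MW^*$.

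The main obstacle will be verifying that $\cS$ is a genuine tensor ideal and that the tensor structure induced on the quotient is exact. The cleanest workaround is to replace the naive $\cS$ by the smallest Serre tensor ideal containing the listed generators, or equivalently to define $\cS$ directly as the intersection of the kernels of all mixed Weil cohomologies into $\cA(\DM_\gm^\op)$ (taken over a small representative set), which is manifestly stable under tensoring with arbitrary objects. Exactness of the quotient tensor then follows from \cite[Prop.\,4.5]{BVK}, and the coherence $F_*(\Psi_H) \cong \Psi_{F_*H}$ required for 2-functoriality over $\Ex_*^\rig$ is a formal consequence of the uniqueness of $\Psi_H$. The remaining set-theoretic subtleties (smallness of the class of such cohomologies, local smallness of $\cA(\DM_\gm^\op)$) are handled by fixing a Grothendieck universe at the outset, as signalled in the Notations and assumptions.
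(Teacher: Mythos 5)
Your overall strategy --- a universal abelian recipient of a cohomological functor followed by a Serre tensor ideal quotient imposing the axioms, then descent of the classifying functor --- is the same as the paper's, and your use of the Freyd envelope of the triangulated category $\DM_\gm$ (homological functors correspond to exact functors out of the category of coherent functors) is a legitimate and arguably cleaner replacement for the paper's Step 1, where the cohomological property is instead imposed by hand as a further Serre quotient of the universal $\otimes$-abelian category $T(\cT^{\kappa,\add})$ of \cite[Prop.\,5.4]{BVK}.

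However, the monoidal step is a genuine gap. Day convolution makes the Yoneda/Freyd embedding $h$ \emph{strong} monoidal, $h(M)\otimes h(N)\cong h(M\otimes N)$, whereas a mixed Weil cohomology $H$ is only \emph{lax} monoidal as an ungraded functor: it is the graded functor $H^*$ that is strong, and the ungraded map $H(M)\otimes H(N)\to H(M\otimes N)$ is merely the inclusion of the $(0,0)$-term into $\bigoplus_{i+j=0}H^i(M)\otimes H^j(N)$, which is not an isomorphism in general (take $M=M(X)$, $N=M(Y)^\star$ for $X,Y$ smooth projective of positive dimension: the target acquires all the cross terms $H^i(X)\otimes H^i(Y)^\vee$). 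Consequently no exact strong tensor functor $\Psi_H$ can satisfy $\Psi_H h\cong H$ compatibly with the external products if $h$ is already strong monoidal; worse, your "candidate K\"unneth maps" $\kappa^k_{M,N}$ collapse under Day convolution to fold maps $h((M\otimes N)[k])^{\oplus r}\to h((M\otimes N)[k])$, and inverting those degenerates the quotient. The paper avoids this precisely by first applying Levine's universal external product (\cite[Thm.\,3.1.1]{BVKW}) to adjoin a free \emph{lax} tensor structure $\delta_\kappa\colon K(M)\otimes_\kappa K(N)\to K(M\otimes N)$ before abelianising, and then inverting only the \emph{graded} sums $\delta^k=\bigoplus_{i+j=k}\delta^{i,j}$. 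You would need to replace the Day convolution step by such a universal lax structure. A second, smaller omission: the paper works with $\DM_\gm^{\eff}$ and inverts $L=MW^{\eff}(\Z(1)[2])$ at the end, which requires checking that the permutation involution on $\Z(1)^{\otimes 2}$ is the identity (\cite[Cor.\,2.1.5]{V}); working directly with $\DM_\gm$ does not obviously dispense with the verification that $L$ is invertible in $\cMW$, since that invertibility is normally deduced from the graded strong monoidality --- exactly the point at issue above.
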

\begin{proof} Let  $\cT:=\DM_\gm^{\eff,\op}$ for short. The mixed Weil cohomology $MW$ is constructed in several steps.

{\it Step 1.} We construct $U^*:\cT\to \cU^{(\Z)}$ and $(U^*,\delta^*, \upsilon^*)$ an effectively bounded cohomological functor endowed with an external product on $\cT$ (Definition \ref{bound}) as follows. Applying Levine's universal external product (= universal unital symmetric lax tensor functor, see \cite[Thm.\,3.1.1]{BVKW}) to $\cT$ we get 
$$ (K,\delta_\kappa, \upsilon_\kappa) : \cT\to \cT^\kappa$$ providing a symmetric monoidal category $(\cT^\kappa, \otimes_\kappa, \omega)$ together with a universal external product 
\[\delta_\kappa :K(M)\otimes_\kappa K(N)\to K(M\otimes N)\]
and $\upsilon_\kappa:\omega\to K(\Z)$ for $M,N\in \cT$. Let $\cT^{ \kappa,\add}$ be the relative $\Q$-linear additive completion (see \cite[Prop.\,3.4.3]{BVKW}), in such a way that the functor $\cT^{ \kappa}\to \cT^{\kappa,\add}$ is a strong tensor functor and its composition $K^\add$ with $K$ is $\Q$-linear.  Then consider $\lambda: \cT^{\kappa,\add}\to T (\cT^{\kappa,\add})$ where $T$ is the 2-functor in \cite[Prop.\,5.4]{BVK} providing the universal $\otimes$-abelian category and the functor $S$ further composing with $K^\add$. We thus have  $(T (\cT^{\kappa,\add}), \otimes, \un)$ where $T (\cT^{\kappa,\add})$ is $\Q$-linear abelian, $\otimes$ is exact, $\un =T(\omega)$, and $S: \cT \to T (\cT^{\kappa,\add})$ is just a $\Q$-linear functor endowed with an external product in the sense of Definition \ref{cohext}. To make $S$ an effectively bounded cohomological functor, let $\cU$ be the quotient of $T (\cT^{\kappa,\add})$ by the minimal Serre tensor ideal such that for any distinguished triangle $M \to N \to F \longby{+1}$ and the corresponding complex $S(M) \to S(N) \to S(F)$ in $T (\cT^{\kappa,\add})$ its homology $S^\circ=0$ in $\cU$, also $S(M(X)[p])=0$ in $\cU$ for $p> 2d$ or $p<0$ where $X$ is smooth and equidimensional of $\dim (X)=d$. 
Since $\cT$ is generated by such $M(X)$ we thus get an effectively bounded cohomological functor $U$ given by composition 
$$\xymatrix{\cT\ar[r]_{K^\add} \ar@/^1.7pc/[rrr]^{U}\ar@/_1.9pc/[rr]_{S} & \cT^{\kappa,\add}\ar[r]_{\lambda} &T (\cT^{\kappa,\add}) \ar[r]_{q} & \cU}$$
which is also endowed with an external product $(U,\delta, \upsilon)$ induced by $(K,\delta_\kappa, \upsilon_\kappa)$ via the strong $\otimes$-functors $\lambda$ and $q$ (see \cite[Prop.\,4.5]{BVK} for properties of this latter quotient functor).

{\it Step 2.} We now obtain $MW^\eff:\cT\to \cMW^\eff$ satisfying K\"unneth, point axioms (Definition \ref{kunpdf}) and, partly, the trace axiom (Definition \ref{trace}) by taking a further quotient $\cU\to \cMW^\eff$ and $MW^\eff$ is its composition with $U$. In fact, we have $\upsilon: \un \to U (\Z)$, $\pi: U (\Z)\to U (X)$ for X geometrically connected and
\[\delta^k: \bigoplus_{i+j=k} U^i (M)\otimes U^j(N)\to U^k(M\otimes N)\]
 in $\cU$, for $i,j,k\in \Z$, $M, N\in\cT$. Let $\cMW^\eff$ be the ($\Q$-linear tensor) quotient of $\cU$ making the following list of morphisms invertible: $\upsilon$, $\pi$, $\delta$ and  $U^p(\Z(1))\to 0$ for $p\neq 2$ (note that $U^p(\Z)=0$ for $p\neq 0$ by the previous step).
 
{\it Step 3.} Finally, we make $L:= MW^\eff(\Z(1)[2])$ invertible obtaining a $\otimes$-abelian $\cMW:= \cMW^\eff[L^{-1}]$ (endowed with the tautological $\Tr$)  together with an exact strong $\otimes$-functor $\cMW^\eff\to \cMW$.  Voevodsky's condition that the permutation involution for the Tate object is the identity is verified in $\cT$ by \cite[Cor.\,2.1.5]{V} whence in $\cMW^\eff$ via K\"unneth formula. The claimed mixed Weil cohomology $(MW,\kappa,\upsilon)$ with values in $(\cMW, L)$ is given by further composition observing that since now $L$ is invertible in $\cMW$ the functor $MW^\eff$ lifts to $$MW^*: \DM_\gm^\op \to \cMW^{(\Z)}$$
 which is a strong tensor cohomological functor by construction. Its image is finitely supported by \cite[Lemma 3.3.6 a)]{BVKW} as $\DM_\gm$ is rigid.  Finally, it is clearly universal for mixed Weil theories taking values in $\Q$-linear $\otimes$-abelian categories by construction. Since $\DM_\gm$ is rigid therefore $MW$ takes values in $(\cMW_\rig, L)$ by Lemma \ref{kunp} but $\cMW_\rig$ is $\otimes$-abelian: this implies that $\cMW_\rig=\cMW$ by universality.
\end{proof} 

Recall from \cite[Def.\,8.3.4]{BVKW} that we also have a notion of tight Weil cohomology: there is a corresponding version in the mixed case.
\begin{defn}[Mixed tight Weil cohomology]  \label{mixtightweil}
Let  $(\cA, L)$ be a $\otimes$-abelian category together with a Lefschetz object.
A mixed Weil cohomology $(H, \kappa, \upsilon, \Tr)$ with values in $(\cA, L)$ is \emph{tight} if the following additional conditions are satisfied:
\begin{enumerate}
    \item \protect(Weak Lefschetz): we have $H^p(U)=0$ for affine $U= \Spec (R)\in \Sm_k$ and $p>\dim U$
    \item \protect(Hard Lefschetz): for $p\le d =\dim (X)$ the ismorphism
$$L^{p}: H^{d-p}(X)\iso H^{d+p}(X)(p)$$
induced by the Lefschetz operator $L:= L_Y$ for $X\in \SmProj_k$ and $Y$ a smooth hyperplane section of $X\in\SmProj_k$
    \item \protect(Albanese invariance): $H^1(\Alb_X)\iso H^1(X)$ induced by the canonical map $M(X)\to \Alb_X$ given by Serre Albanese variety for $X\in\Sm_k$
    \item \protect(Normalisation): $H^0(\pi_0(X))\iso H^0(X)$ induced by the canonical map $M(X)\to \pi_0(X)$ for the scheme of constants $\pi_0(X)$ and $X\in\Sm_k$.
\end{enumerate}
 \end{defn}
 \begin{remark}\label{van}
     In Definition \ref{mixtightweil}, for a smooth hyperplane section $Y$ of $X\in\SmProj_k$ such that $\dim(X)=d$,  from (1) we get that $H^p(X,Y)=0$ if $p< d$ and $H^p(X)\iso H^p(Y)$ if $p< d-1$ as usual by Lemma \ref{rel}. The injectivity of $H^{d-1}(X)\into H^{d-1}(Y)$ follows from (2). For $H^p_R (X)$ given by an object $R\Gamma (M(X))$ of $\cT$ as in Example \ref{ex} and provided with a Lefschetz morphism $R\Gamma (M(X))[-1]\to R\Gamma (M(X))[1](1)$ inducing the isomorphisms in (2) we have a decomposition $$R\Gamma (M(X))\cong \oplus H^p_R (X)[-p]$$ in $\cT$ by Deligne's decomposition theorem, see \cite{B} for a simple proof of the latter. 
 \end{remark}
 
Let $\MW^+ (-)$ be the 2-functor of mixed tight Weil cohomologies. The analogue of Lemma \ref{weil} holds for tight. Let $(\cW_\ab^+, W_\ab^+)$ be the (abelian valued) universal tight Weil cohomology constructed in \cite[Thm.\,8.4.1]{BVKW}.
\begin{lemma}\label{pure}
   If $H\in \MW^+ (\cA, L)$ then the restriction of $H$ via $\iota$ yields a tight Weil cohomology $(\cA, H\iota)$.  Therefore, there is an exact strong $\otimes$-functor 
    $$\Phi_H^+: \cW_\ab^+\to \cA$$ such that the restriction $(\cA, H\iota)$ is the push-forward of $(\cW_\ab^+, W_\ab^+)$ along $\Phi_H^+$.
\end{lemma}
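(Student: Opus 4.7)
The plan is to reduce the statement to Lemma \ref{weil} together with the universal property of the abelian tight Weil cohomology $(\cW_\ab^+, W_\ab^+)$ established in \cite[Thm.\,8.4.1]{BVKW}. By Lemma \ref{weil}, the restriction $H\iota:\cM_\rat\to \cA^{(\Z)}$ is already a Weil cohomology, so the task reduces to verifying the four additional tight axioms of \cite[Def.\,8.3.4]{BVKW} for $H\iota$ restricted to $\SmProj_k$; once this is done, the classifying functor $\Phi_H^+$ is supplied for free by the universal property.

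First I would dispatch the three ``easy'' axioms. Hard Lefschetz (Definition \ref{mixtightweil}(2)) is already formulated for $X\in\SmProj_k$ with smooth hyperplane section $Y$, so the isomorphism $L^p: H^{d-p}(X)\iso H^{d+p}(X)(p)$ transports verbatim to the pure setting via the embedding $\iota$. Albanese invariance (3) and Normalisation (4) are stated for all $X\in\Sm_k$ and in particular for $X\in\SmProj_k$, where Serre's $\Alb_X$ and $\pi_0(X)$ specialize to their classical counterparts: the required isomorphisms $H^1(\Alb_X)\iso H^1(X)$ and $H^0(\pi_0(X))\iso H^0(X)$ are then exactly the pure axioms.

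The only point requiring a translation is weak Lefschetz. The mixed axiom (1) yields $H^p(U)=0$ for smooth affine $U$ and $p>\dim U$; for a smooth hyperplane section $Y\subseteq X$ of a smooth projective $X\in\SmProj_k$ the complement $U=X\setminus Y$ is smooth affine of dimension $d=\dim X$, hence $H^p(X\setminus Y)=0$ for $p>d$. Via the excision/support long exact sequence of Lemma \ref{rel} and the duality isomorphism of Lemma \ref{dual}, this is equivalent to the vanishing $H^p(X,Y)=0$ for $p<d$, which produces $H^p(X)\iso H^p(Y)$ for $p<d-1$ and the injectivity $H^{d-1}(X)\into H^{d-1}(Y)$, i.e. precisely the pure weak Lefschetz recalled in Remark \ref{van}.

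Having verified the four axioms, $(\cA, H\iota)$ is tight in the sense of \cite[Def.\,8.3.4]{BVKW}, so by universality \cite[Thm.\,8.4.1]{BVKW} there exists a unique exact strong $\otimes$-functor $\Phi_H^+:\cW_\ab^+\to \cA$ with $(\Phi_H^+)_*(W_\ab^+)\cong H\iota$, as required. The only (mild) obstacle I anticipate is the duality bookkeeping needed to turn the mixed vanishing for affines into the pure Lefschetz injectivity on projective hyperplane sections; the other three axioms are immediate from the shape of Definition \ref{mixtightweil}.
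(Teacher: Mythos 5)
Your proposal is correct and follows essentially the same route as the paper's proof: check the four tightness axioms of \cite[Def.\,8.3.4]{BVKW} for $H\iota$ (the only non-immediate one being weak Lefschetz, which you translate exactly as in Remark \ref{van} via the duality of Lemma \ref{dual} and the relative long exact sequence of Lemma \ref{rel}), then invoke the universal property of $(\cW_\ab^+, W_\ab^+)$ from \cite[Thm.\,8.4.1]{BVKW} to obtain $\Phi_H^+$. The only cosmetic divergence is that you derive the injectivity of $H^{d-1}(X)\into H^{d-1}(Y)$ directly from the vanishing $H^{d-1}(X,Y)=0$, whereas Remark \ref{van} attributes it to hard Lefschetz; both derivations are valid.
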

\begin{proof}
    By Definition 8.3.4 in \cite{BVKW} and the previous Remark \ref{van} we have that $(\cA, H\iota)$ is tight and  Theorem 8.3.4 in \loccit yields the claimed classifying fucntor $\Phi_H^+$ as claimed.
\end{proof}

\begin{thm}[Universal mixed tight Weil cohomology] \label{tight}
The 2-functor $\MW^+ (-)$ is 2-representable by $(\cMW^+, MW^+)$. For any tight $H\in \MW^+ (\cA, L)$ the classifying functor $\Psi_H$ factors through $\cMW^+$ which is a quotient of $\cMW$ (Theorem \ref{mixed}) fitting in the following commutative diagram 
$$\xymatrix{\cW_\ab\ar[d]_{}\ar[r]_{\Phi} \ar@/^1.8pc/[rr]^{\Phi_H} &\cMW\ar[r]^{\Psi_H} \ar[d]_{} & \cA\\
\cW_\ab^+ \ar[r]_{\Phi^+} \ar@/_3.5pc/[rru]_{\Phi_H^+}  &\cMW^+\ar[ur]_{\Psi_H^+} & }$$
where $\Psi_H^+$ is the classifying strong tensor functor such that $(\Psi_H^+)_*(MW^+)\cong H$.
\end{thm}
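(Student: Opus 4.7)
The plan is to construct $\cMW^+$ as a further Serre tensor ideal quotient of the category $\cMW$ from Theorem \ref{mixed}, designed to enforce each of the four tightness conditions of Definition \ref{mixtightweil}. Concretely, I would let $\cS \subseteq \cMW$ be the minimal Serre tensor ideal containing (i) the objects $MW^p(U)$ for $U \in \Sm_k$ affine and $p > \dim U$; (ii) for every $X \in \SmProj_k$ with smooth hyperplane section $Y$ and every $0 \leq p \leq d = \dim X$, both the kernel and cokernel of the Lefschetz morphism $L^p : MW^{d-p}(X) \to MW^{d+p}(X)(p)$ induced by $L = L_Y$, a morphism already defined in $\cM_\rat$ and transported to $\cMW$ via $\iota$; (iii) the kernels and cokernels of the canonical maps $MW^1(\Alb_X) \to MW^1(X)$ for $X \in \Sm_k$; and (iv) the kernels and cokernels of $MW^0(\pi_0(X)) \to MW^0(X)$ for $X \in \Sm_k$. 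Setting $\cMW^+ := \cMW/\cS$, the quotient functor $q : \cMW \to \cMW^+$ is exact and strong tensor with $\cMW^+$ again $\otimes$-abelian and rigid by \cite[Prop.\,4.5]{BVK}, so $MW^+ := q \circ MW$ inherits from $MW$ the structure of a mixed Weil cohomology, now tight by construction.

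For the universal property, given any tight $H \in \MW^+(\cA, L)$, Theorem \ref{mixed} produces the classifying exact strong tensor functor $\Psi_H : \cMW \to \cA$ with $(\Psi_H)_*(MW) \cong H$. The four tightness axioms for $H$ then translate into the statement that $\Psi_H$ annihilates each generator of $\cS$: weak Lefschetz yields $\Psi_H(MW^p(U)) \cong H^p(U) = 0$; hard Lefschetz sends each $L^p$ to an isomorphism, so its kernel and cokernel lie in $\ker \Psi_H$; and similarly for Albanese invariance and normalisation. Since $\ker \Psi_H$ is a Serre tensor ideal (by exactness and strong monoidality), it contains $\cS$, and the universal property of the Serre tensor quotient yields the unique factorisation $\Psi_H^+ : \cMW^+ \to \cA$ with $\Psi_H^+ \circ q = \Psi_H$ and hence $(\Psi_H^+)_*(MW^+) \cong H$.

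The diagram is assembled as follows: the left vertical arrow $\cW_\ab \to \cW_\ab^+$ is the one from \cite[Thm.\,8.4.1]{BVKW}; the functor $\Phi^+ : \cW_\ab^+ \to \cMW^+$ is the classifying functor attached to the tight Weil cohomology $(\cMW^+, MW^+\iota)$ obtained by applying Lemma \ref{pure} to $MW^+$; and $\Phi : \cW_\ab \to \cMW$ is the functor already produced in Theorem \ref{mixed}. Commutativity of each square, as well as of the triangle on the right, then follows from the uniqueness clauses in the relevant universal properties, as each composite classifies the same pure (respectively mixed) tight Weil theory.

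The main obstacle is twofold: first, verifying that $\cS$ is a proper ideal, i.e.\ that $\cMW^+$ is nonzero; this follows a posteriori from the existence of nonzero tight mixed Weil cohomologies such as the classical realisations in Example \ref{ex}, since any such $\Psi_H$ then factors through $\cMW^+$ by the argument above. Second, one must check that the generators (ii)--(iv) are genuinely well-defined in $\cMW$ and not merely in pure or effective subcategories: for (ii) this is immediate as $L_Y \in \cM_\rat$ passes to $\cMW$ under the fully faithful $\iota$, and for (iii)--(iv) it follows from the fact that $\Alb_X$ and $\pi_0(X)$ already yield morphisms in $\DM_\gm$ by the constructions recalled in \S\ref{1.1}, hence in $\cMW$ after applying $MW$.
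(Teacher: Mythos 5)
Your construction is exactly the paper's: the paper defines $\cMW^+$ as the quotient of $\cMW$ making invertible the morphisms $H^p(U)\to 0$, $L^p$, $H^1(\Alb_X)\to H^1(X)$ and $H^0(\pi_0(X))\to H^0(X)$, which is precisely your Serre tensor ideal generated by the relevant objects and by kernels and cokernels, and the factorisation of $\Psi_H$ and the commutativity of the diagram are obtained the same way (via Lemmas \ref{weil} and \ref{pure} and the universal property of the quotient). Your additional remarks on non-triviality of the quotient and well-definedness of the generating morphisms are sound elaborations of points the paper leaves implicit.
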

\begin{proof}
As for \cite[Thm.\,8.3.4]{BVKW} describing $\cW_\ab^+ $ as a quotient of $\cW_\ab$, we let $\cMW^+$ be the quotient of $\cMW$ making invertible $H^p(U)\to0$ for affine $U\in \Sm_k$ and $p>\dim U$, $L^{p}: H^{d-p}(X)\to H^{d+p}(X)(p)$ for $p\le \dim X$ and $X\in \SmProj_k$,  $H^1(\Alb_X)\to H^1(X)$ and $H^0(\pi_0(X)) \to H^0(X)$ and $X\in \Sm_k$. The claimed commutative diagram follows from Lemmas \ref{pure} and \ref{weil}.
\end{proof}
\section{Comparison with Nori and Andr\'e motives}
\subsection{Universal enrichment} To compare Nori's construction with ours we first introduce the corresponding analogue in this general context of universal cohomology.
For any mixed Weil cohomology $H\in \MW (\cA, L)$, from the Theorem \ref{mixweil}, we obtain $\Psi_H:\cMW \to \cA$ and we then get the $\Q$-linear $\otimes$-abelian category
$$\cMW_H := \cMW/\ker \Psi_H$$ 
together with a Lefschetz object $L_{H}$ induced by the projection. There is a faithful exact strong tensor functor 
$$\bar\Psi_H: \cMW_H \into \cA$$ 
such that $MW_H\in \MW (\cMW_H, L_H)$ is induced by $MW$ via the pushforward along the projection: this  yields $\bar\Psi_H MW_H\cong H$ by construction.
If $H\in\MW^+ (\cA, L)$ is tight then $\Psi_H$ factors through $\cMW^+$ by Theorem \ref{tight}, we have
$$\cMW_H \cong \cMW^+/\ker \Psi_H^+$$
as well and $\bar\Psi_H^+ MW_H^+\cong H$. Note that for the universal theory $H=MW$ we have that $\cMW_H = \cMW$ is a tautology.
\begin{defn}\label{equi} 
Call $(\cA', H')$ an \emph{enrichment} of $(\cA, H)$ if there is a \underline{faithful} exact strong tensor functor $F: \cA'\to\cA$ such that  $(\cA, H)$ is the pushforward of $(\cA', H')$ along $F$ \ie $ H\cong F H'$ compatibly with the Lefschetz objects, and we also say that $(\cA, H)$ is a \emph{realisation} of $(\cA', H')$ in this case. Say that two mixed Weil cohomologies $(\cA, H)$ and $(\cA', H')$ are \emph{equivalent} if $\ker \Psi_H=\ker \Psi_{H'}$.
\end{defn}
The following is the mixed analogue of \cite[Thm.\,6.1.7]{BVKW}. 
\begin{thm}\label{initial}
    The Weil cohomology $(\cMW_H, MW_H)$ is the universal or initial enrichment of the mixed Weil cohomology $(\cA, H)$. If $(\cA, H)$ is tight then $(\cMW_H, MW_H)$ is tight and universal among mixed tights.  Moreover, the tight Weil cohomology $(\cA, H\iota)$ and its universal enrichment   
     $\bar \Phi_H:\cW_H^\ab\into \cA$ are fitting in the following commutative diagram 
    $$\xymatrix{\cW_\ab^+\ar[d]_{\Phi^+}\ar[r]_{} &\cW_H^\ab\ar[d]_{\Xi_H} \ar[dr]^{\bar \Phi_H} &\\
\cMW^+\ar[r]_{}\ar@/_1.7pc/[rr]_{\Psi_H^+}  &\cMW_H \ar[r]_{\bar \Psi_H} & \cA}$$
where $\Xi_H : \cW_H^\ab\into \cMW_H $ is an induced faithful exact $\otimes$-functor. If $H$ is classical then $\cMW_H$ is Tannakian. 
\end{thm}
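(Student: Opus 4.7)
The plan is to reduce everything to the universal property of $\cMW$ (Theorem \ref{mixed}) and its tight variant (Theorem \ref{tight}), via the Serre tensor quotient formalism of \cite[Prop.\,4.5]{BVK}. First I observe that $\cMW_H := \cMW/\ker \Psi_H$ inherits a unique $\otimes$-abelian structure making the projection $q_H: \cMW\to \cMW_H$ an exact strong $\otimes$-functor; the induced $\bar\Psi_H: \cMW_H \to \cA$ is then faithful exact strong $\otimes$ by construction of the quotient. The pushforward $MW_H := (q_H)_* MW$ then lies in $\MW(\cMW_H, L_H)$ and satisfies $\bar\Psi_H MW_H \cong H$, so $(\cMW_H, MW_H)$ is an enrichment of $(\cA, H)$.

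For initiality I take any enrichment $F: \cA' \to \cA$ of $H$ with $FH' \cong H$; the classifying functor $\Psi_{H'}: \cMW \to \cA'$ of Theorem \ref{mixed} satisfies $F\Psi_{H'} \cong \Psi_H$. Since $F$ is faithful exact we have $F(X) = 0$ iff $X = 0$, whence $\ker \Psi_{H'} = \ker \Psi_H$ as Serre tensor ideals; so $\Psi_{H'}$ descends uniquely through $q_H$ to a faithful exact $\otimes$-functor $\bar\Psi_{H'}: \cMW_H \to \cA'$ with $F \bar\Psi_{H'} \cong \bar\Psi_H$ and $\bar\Psi_{H'} MW_H \cong H'$. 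This is the claimed initial property.

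In the tight case, Theorem \ref{tight} gives $\Psi_H = \Psi_H^+ \circ \pi$ where $\pi: \cMW \to \cMW^+$ is the projection, and the same quotient argument yields $\cMW_H \cong \cMW^+/\ker \Psi_H^+$; tightness of $MW_H$ (Definition \ref{mixtightweil}) is inherited because the four defining axioms already hold as isomorphisms in $\cMW^+$ and are preserved by any exact strong $\otimes$-functor, and initiality among tight enrichments follows by the analogous argument with $\cMW^+$ in place of $\cMW$. For the displayed square, the restriction $MW_H \iota$ is by Lemma \ref{pure} a tight Weil cohomology enriching $(\cA, H\iota)$ along $\bar\Psi_H$; applying the pure analogue of this theorem (\cite[Thm.\,6.1.7]{BVKW}) produces the unique faithful exact strong $\otimes$-functor $\Xi_H: \cW_H^\ab \to \cMW_H$ with $\bar\Psi_H \Xi_H \cong \bar\Phi_H$, and commutativity of the left square with $\Phi^+$ follows from uniqueness in the universal property.

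The last assertion, that $\cMW_H$ is Tannakian when $H$ is classical, will be the main obstacle, as the paper's introduction stresses that this hinges on simplicity of the unit. In the classical case $\cA$ is the category of finite-dimensional $K$-vector spaces over a field $K$ of characteristic zero, and $\bar\Psi_H: \cMW_H \hookrightarrow \cA$ is a faithful exact strong $\otimes$-functor from a $\Q$-linear rigid $\otimes$-abelian category. Because $\bar\Psi_H(\un_{\cMW_H}) = K$ is one-dimensional and $\bar\Psi_H$ is faithful exact, the unit $\un_{\cMW_H}$ has no nontrivial subobject, hence is simple; Schur's lemma then forces $E := \End_{\cMW_H}(\un)$ to be a field, embedded into $K$ via $\bar\Psi_H$. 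Viewing $\cMW_H$ as $E$-linear and $\bar\Psi_H$ as a fibre functor to $K$-vec, Deligne's Tannakian reconstruction \cite{DT} concludes.
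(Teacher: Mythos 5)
Your proposal is correct and follows essentially the same route as the paper: the quotient $\cMW/\ker\Psi_H$ with its universal property gives initiality, tightness is inherited (the paper deduces it directly from faithfulness of $\bar\Psi_H$, you equivalently from the factorisation through $\cMW^+$), and $\Xi_H$ arises from $\Phi(\ker\Phi_H)\subseteq\ker\Psi_H$, which is what your appeal to the initiality of $\cW_H^\ab$ amounts to. Your final paragraph supplies the standard Deligne--Milne argument for the Tannakian claim (simplicity of $\un$ via the faithful exact fibre functor), which the paper's proof leaves implicit.
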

\begin{proof} The universality of $(\cMW_H, MW_H)$ comes from the universal property of the quotient and its tightness is given by the faithfulness of $\bar\Psi_H$ whenever $(\cA, H)$ is tight. As from the proof of \cite[Thm.\,6.1.7]{BVKW} the category $\cW_H^\ab:= \cW_\ab/\ker \Phi_H$ for $\Phi_H$ as in Lemma \ref{pure}, $\Phi^+$ is induced by a localisation of $\Phi$ as in Theorem \ref{tight}, and $\Phi$ is sending $\ker \Phi_H$ to $\ker\Psi_H$ therefore we get $\Xi_H$ and the commutative diagram as claimed. Faithfulness of $\Xi_H$ follows from faithfulness of $\bar \Phi_H$.
\end{proof}

\subsection{Nori and Andr\'e motives} Let's apply Theorem \ref{initial} to $H= H_{\rm Betti}$ the mixed tight Weil cohomology given by Betti cohomology for $k$ a subfield of the complex numbers. 

Recall the construction of the $\Q$-linear $\otimes$-abelian rigid category of Nori motives $\cNM$ (see \cite[\S 4]{BVHP}, \cite{AN} and \cite[Thm.\,9.3.10]{HMN}). The category of effective cohomological Nori motives $\cNM^\eff$ is the universal abelian category associated with the Nori quiver $D^{\rm Nori}$ with vertices $(X, Y, i)$ with $i\in\N$ given by (good) pairs $(X,Y)\in\Sch_k^\square$ and its Betti cohomology representation in finite dimensional $\Q$-vector spaces.  Let $H_{\rm Nori}^i(X, Y)\in \cNM^\eff$ be Nori's representation. Because of Nori's basic Lemma the category $\cNM^\eff$ inherits a tensor structure. The rigid category $\cNM$ is then obtained from $\cNM^\eff$ by tensor inverting the Lefschetz object $L_{\rm Nori}:= H_{\rm Nori}^2(\P^1)\cong H^1_{\rm Nori}(\G_m)$ (see \cite[Def.\,9.3.7]{HMN}). Finally, Nori's basic Lemma also inhanced Betti cohomology with cellularity and a contravariant triangulated strong $\otimes$-functor
$$R_{\rm Nori}: \DM_\gm^\op\to D^b(\cNM)$$
lifting the contravariant $R_{\rm Betti}$ along the canonical forgetful functor from $D^b(\cNM)$ to $D^b(\Q)$,  the bounded derived category of finite dimensional $\Q$-vector spaces (see \cite[Thms.\,9.1.5, 10.1.1 \& 10.1.4]{HMN}). As in the Example \ref{ex} the functor $R_{\rm Nori}$ yields
$$H_{\rm Nori}:= H^0R_{\rm Nori}: \DM_\gm^\op\to \cNM$$
which is a mixed (tight) Weil cohomology (it is tight as an enrichment of Betti which is tight). There is a covariant homological version of $R_{\rm Nori}$ taking values in the category of Nori homological motives (see \cite[Prop.\,7.12]{CG}): the two are interchanged via duality. 

Recall that the restriction of Betti cohomology to smooth projective varieties yields a tensor equivalence
$$\theta_H: \cM^A_H\iso \cW_{\rm Betti}:= \cW_H^\ab$$
with the semi-simple abelian category $\cM^A_H$ of Andr\'e motives associated with $H= H_{\rm Betti}$, as proven in \cite[Thm.\,9.3.3]{BVKW}, and
$$\Xi_H: \cW_{\rm Betti}\into \cMW_{\rm Betti}:= \cMW_H$$
by Theorem \ref{initial}. On the other hand, the description of André motives as pure Nori motives is provided by \cite[Thm.\,5.5]{AN} or \cite[Thm.\,10.2.7]{HMN}.
Summarizing, we obtain the following:
\begin{thm}\label{Nori}
 We have a comparison (exact, strong) tensor equivalence 
$$\bar \Psi_{\rm Nori}: \cMW_{\rm Betti}=\cMW_{\rm Nori}\iso \cNM$$
where $\cMW_{\rm Nori}:= \cMW_H$ for $H= H_{\rm Nori}$. The precomposition of $\bar\Psi_{\rm Nori}$ with $\Xi_H\theta_H$ identifies Andr\'e motives $\cM^A_H$ with the full abelian tensor subcategory generated by $H^i_{\rm Nori}(X)$ with $X\in \SmProj_k$ and $i\in\N$. 
\end{thm}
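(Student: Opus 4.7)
The plan splits into three steps: first establish the tautological equality $\cMW_{\rm Betti}=\cMW_{\rm Nori}$, then upgrade the faithful exact strong tensor functor $\bar\Psi_{\rm Nori}$ to an equivalence, and finally identify Andr\'e motives as pure Nori motives inside $\cNM$ via $\bar\Psi_{\rm Nori}\Xi_H\theta_H$. For the first step, the canonical fibre functor $\omega:\cNM\to \Q\text{-vect}$ is faithful exact strong tensor and $H_{\rm Betti}\cong \omega H_{\rm Nori}$, so by faithfulness an object $M\in\cMW$ satisfies $\Psi_{H_{\rm Nori}}(M)=0$ iff $\Psi_{H_{\rm Betti}}(M)=0$. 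The two Serre tensor ideals $\ker\Psi_{H_{\rm Nori}}$ and $\ker\Psi_{H_{\rm Betti}}$ in $\cMW$ therefore coincide, giving $\cMW_{\rm Nori}=\cMW_{\rm Betti}$ by definition, and Theorem \ref{initial} applied to $H_{\rm Nori}$ produces the faithful exact strong tensor functor $\bar\Psi_{\rm Nori}:\cMW_{\rm Nori}\hookrightarrow\cNM$.

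To upgrade $\bar\Psi_{\rm Nori}$ to an equivalence I would construct an inverse $\Upsilon:\cNM\to\cMW_{\rm Nori}$ via Nori's universal property of the diagram category $\cNM^{\eff}=\cC(D^{\rm Nori},H_{\rm Betti})$. By Lemma \ref{rel} the assignment $(X,Y,i)\mapsto MW^i_{\rm Nori}(M(X,Y))$ defines a representation of Nori's quiver in $\cMW_{\rm Nori}^{\eff}$ whose composition with the faithful exact $\bar\Psi_{H_{\rm Betti}}:\cMW_{\rm Nori}\to\Q\text{-vect}$ reproduces Betti cohomology on good pairs. Nori's universal property then produces a unique faithful exact functor $\Upsilon^{\eff}:\cNM^{\eff}\to\cMW_{\rm Nori}^{\eff}$; inverting the matching Lefschetz objects $L_{\rm Nori}$ and $L_H$ on both sides yields $\Upsilon$, and its strong tensoriality follows from the relative K\"unneth isomorphism in Lemma \ref{rel} that also pins down Nori's tensor structure. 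The composition $\bar\Psi_{\rm Nori}\Upsilon$ is then a faithful exact tensor endofunctor of $\cNM$ compatible with $\omega$ and restricting to the identity on $D^{\rm Nori}$, hence isomorphic to $\mathrm{id}_{\cNM}$ by uniqueness in Nori's universal property. Dually, $\Upsilon\bar\Psi_{\rm Nori}$ is a faithful exact tensor endofunctor of $\cMW_{\rm Nori}$ compatible with $\bar\Psi_{H_{\rm Betti}}MW_{\rm Nori}=H_{\rm Betti}$, so by the universal property of the enrichment in Theorem \ref{initial} it is $\mathrm{id}_{\cMW_{\rm Nori}}$; hence $\bar\Psi_{\rm Nori}$ is a tensor equivalence.

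For the Andr\'e motives identification, \cite[Thm.\,9.3.3]{BVKW} provides the tensor equivalence $\theta_H:\cM^A_H\iso\cW_H^\ab$ for $H=H_{\rm Betti}$, and $\Xi_H:\cW_H^\ab\hookrightarrow \cMW_H=\cMW_{\rm Nori}$ is the faithful exact embedding from Theorem \ref{initial}. Postcomposing with the equivalence $\bar\Psi_{\rm Nori}$ just obtained gives a faithful exact strong tensor functor $\cM^A_H\hookrightarrow\cNM$ whose essential image is the full abelian tensor subcategory generated by the graded pieces $H^i_{\rm Nori}(X)$ for $X\in\SmProj_k$ and $i\in\N$: indeed $MW_{\rm Nori}\iota$ is a tight Weil cohomology on Chow motives, hence decomposes by hard Lefschetz (Remark \ref{van}) so that the image of $h(X)$ is $\bigoplus_i H^i_{\rm Nori}(X)$. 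By \cite[Thm.\,10.2.7]{HMN} (or \cite[Thm.\,5.5]{AN}) this pure subcategory of $\cNM$ is precisely Andr\'e motives, which concludes the identification.

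The main technical obstacle I foresee is the careful construction of $\Upsilon$ — in particular verifying that Nori's tensor structure (fixed by the K\"unneth formula applied to good pairs via the basic Lemma) really matches the tensor structure on $\cMW_{\rm Nori}^{\eff}$ inherited from $\DM_\gm$, together with the clean passage from the effective categories $\cNM^{\eff}$ and $\cMW_{\rm Nori}^{\eff}$ to the full ones by inverting the Lefschetz objects $L_{\rm Nori}$ and $L_H$. Once these bookkeeping points are settled, both uniqueness statements in the respective universal properties force both compositions to be the identity without further effort.
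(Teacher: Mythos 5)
Your proposal is correct and follows essentially the same route as the paper: establish $\ker\Psi_{H_{\rm Nori}}=\ker\Psi_{H_{\rm Betti}}$ from the faithful forgetful functor, obtain $\bar\Psi_{\rm Nori}$ from the factorisation in Theorem \ref{initial}, build the quasi-inverse from the representation of $D^{\rm Nori}$ by relative cohomology in $\cMW_{\rm Nori}$ via Nori's universal property, and identify Andr\'e motives through $\theta_H$, $\Xi_H$ and the known equivalence with pure Nori motives. You merely spell out in more detail the verification that the two composites are isomorphic to the identity, which the paper leaves implicit.
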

\begin{proof} 
The forgetful tensor functor from $\cNM$ to $\Q$-vector spaces is lifting $H_{\rm Betti}$ to  $H_{\rm Nori}$ (by Nori's construction) making $H_{\rm Nori}$ an enrichment of $H_{\rm Betti}$ so that $H_{\rm Betti}$ is equivalent to $H_{\rm Nori}$ (in the sense of Definition \ref{equi}) and $\cMW_{\rm Betti}=\cMW_{\rm Nori}$.

The classifying functor $\Psi_{\rm Nori}: \cMW\to \cNM$ corresponding to $H_{\rm Nori}$ by Theorem \ref{mixed} factors through $\bar \Psi_{\rm Nori}: \cMW_{\rm Nori}\into \cNM$ by construction. Moreover, there is a representation (in fact, a graded tensor representation on good pairs, see \cite{BVHP}) of $D^{\rm Nori}$ in $\cMW_{\rm Nori}$ providing a faithful exact tensor functor $\cNM\into \cMW_{\rm Nori}$ by the universal property of $\cNM$. Thus the latter functor is a quasi-inverse of $\bar \Psi_{\rm Nori}$ by Theorem \ref{initial}. 
\end{proof}

\begin{remark}\label{basic}
Let's point out that for any mixed Weil cohomology $(\cA, H)$ the $\otimes$-abelian rigid category $\cMW_H$ exists (without appealing to the basic lemma). Moreover,  $\cMW_H$  is equipped with a universal representation of Nori's quiver $D^{\rm Nori}$ which is an enrichment of that of $H$ in $\cA$ by setting $$H: D^{\rm Nori}\to \cA \hspace{1cm} (X,Y,i) \mapsto H^i(X, Y)$$ where $H^i(X, Y)$ is relative cohomology in Lemma \ref{rel}. 
In characteristic zero, for $H$ tight one can see that this relative cohomology is cellular (see \cite[Def.\,1.3.1]{BVPT}) with respect to $\Sch_k^\square$ the category of pairs. In fact, Nori's direct proof of basic Lemma reported in \cite[Thm.\,2.5.1, \S 2.5.2 \& Lemma 2.5.8]{HMN} applies to any such $H$ by excision (Lemma \ref{rel}), relative duality (Remark \ref{reldual}) and weak Lefschetz (Definition \ref{mixtightweil} and Remark \ref{van}). Thus relative cohomology objects of smooth affines schemes can be tautologically computed by the cellular complex. Moreover, this applies to the universal cohomology and we may expect cellularity 
in positive characteristics as well. For example, that of $\ell$-adic cohomology shall also be granted by Beilinson's basic Lemma, \cf \cite[Thm.\,2.5.7]{HMN}.
\end{remark}

\section{Mixed versus pure motives}
\subsection{Further properties} We can get a universal mixed Weil cohomology relatively to any class of mixed Weil cohomologies. Let $\cS$ be a class of mixed Weil cohomologies. For $(\cA,H)\in \cS$ we get $\Psi_H:\cMW\to \cA $ given by Theorem \ref{mixed}. Let
$$\cI_\cS:= \bigcap_{H\in \cS}\ker \Psi_H  \ \
\text{and}\ \ \cMW_\cS:= \cMW/\cI_\cS$$
so that we obtain an induced $MW_{\cS}$, pushforward of $MW$ (= the universal theory in Theorem \ref{mixed}) along the projection from $\cMW$ to $\cMW_\cS$. Moreover, for $(\cA,H)\in \cS$, each $MW_H$ of Theorem \ref{initial} is the push-forward of $MW_{\cS}$ along the further quotient $\cMW_\cS\to \cMW_H$. Therefore, we get a functor $$\rho_H: \cMW_\cS\to \cA$$ refining $\Psi_H$, so that $H$ is the push-forward of $MW_\cS$ along $\rho_H$.
This mixed Weil cohomology $(\cMW_{\cS},MW_{\cS})$ is universal relatively to the class $\cS$.
Actually, the pattern indicated in \cite[Thm.\,3.4]{BV} is fulfilled and we easily obtain the following key fact (analogue of \cite[Thm.\,6.6.3]{BVKW}).
\begin{thm}\label{mixmot}  
For $\cS$ containing classical mixed Weil cohomologies, the following conditions are equivalent:
\begin{enumerate}
\item $\cMW_\cS$ is connected, \ie $Z_\cS:= \End (\un)$ is a domain
\item  all $H\in \cS$ are realisations of $MW_\cS$ along $\rho_{H}$  (see Definition \ref{equi})
\item  all $H\in\cS$ are equivalent (see Definition \ref{equi})
\item if $H\in\cS$ then $\cMW_\cS\iso \cMW_H$ is a tensor equivalence
\item $\cMW_\cS$ is Tannakian.
\end{enumerate}
\end{thm}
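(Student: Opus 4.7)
My plan is to handle $(2) \Leftrightarrow (3) \Leftrightarrow (4)$, $(5) \Rightarrow (1)$, and $(4) \Rightarrow (5)$ by essentially direct unwinding and classical Tannakian theory, leaving $(1) \Rightarrow (2)$ as the main structural step.

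For the bookkeeping: by Theorem \ref{initial} the universal enrichment $\bar\Psi_H$ is faithful, so $H$ being a realisation of $MW_\cS$ along $\rho_H$ (Definition \ref{equi}) is exactly the statement that $\rho_H$ is faithful. Since $\cMW_\cS := \cMW/\cI_\cS$ with $\cI_\cS = \bigcap_{H \in \cS} \ker \Psi_H$, one has $\ker\rho_H = \ker\Psi_H / \cI_\cS$ inside $\cMW_\cS$; thus $\rho_H$ is faithful iff $\ker \Psi_H = \cI_\cS$, and demanding this for every $H \in \cS$ amounts to asking that all the ideals $\ker \Psi_H$ coincide, giving $(3)$, and under the identification $\cMW_H = \cMW/\ker\Psi_H$ to $\cMW_\cS \iso \cMW_H$, giving $(4)$. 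For $(5) \Rightarrow (1)$: a Tannakian category has $\End(\un)$ a field, in particular a domain. For $(4) \Rightarrow (5)$: picking a classical $H_0 \in \cS$ (available by hypothesis and Example \ref{ex}), one obtains $\cMW_\cS \cong \cMW_{H_0}$ together with the faithful exact strong tensor functor $\bar\Psi_{H_0}$ to the classical target; postcomposing with the $\Q$-linear forgetful functor from finitely generated modules over the classical coefficient field (or absolutely flat $\Q$-algebra) to $\Q$-vector spaces yields a fibre functor on $\cMW_\cS$, making it Tannakian.

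The remaining and main implication is $(1) \Rightarrow (2)$, to be handled following the pattern of \cite[Thm.\,3.4]{BV} (whose pure analogue is \cite[Thm.\,6.6.3]{BVKW}). Suppose toward contradiction that some $H \in \cS$ has $\rho_H$ not faithful, so there is a nonzero $N \in \ker \rho_H \subseteq \cMW_\cS$. Because $\cMW_\cS$ is rigid, $N$ is dualisable and has a categorical dimension $\dim(N) = \tr(\operatorname{id}_N) \in Z_\cS$, which is zero under $\rho_H$ while being detected as an honest integer dimension under each classical realisation. Combining this observation for different $H' \in \cS$ whose kernels are pairwise distinct, the plan is to construct elements of $Z_\cS$ whose product lies in $\cI_\cS = 0$ but whose individual images under at least one classical realisation are nonzero, thereby exhibiting a zerodivisor in $Z_\cS$ and contradicting $(1)$. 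The main obstacle is that, unlike in the pure case where semisimplicity immediately turns $\otimes$-ideals into cuts by idempotents in $\End(\un)$, the mixed category $\cMW_\cS$ is not known to be semisimple; the argument must therefore carefully manipulate traces of dualisable kernel objects across the class $\cS$ and crucially use the faithfulness of the system of fibre functors provided by the classical mixed Weil cohomologies to detect non-triviality of zerodivisors in $Z_\cS$.
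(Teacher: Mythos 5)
Your reductions $(2)\Leftrightarrow(3)\Leftrightarrow(4)$, $(4)\Rightarrow(5)$ and $(5)\Rightarrow(1)$ are correct and are essentially what the paper intends (the paper prints no proof at all: it invokes the pure analogues \cite[Thm.\,3.4]{BV} and \cite[Thm.\,6.6.3]{BVKW} and records only the remark, placed after the statement, that $Z_\cS$ is a domain if and only if $\un$ is simple, since subobjects of $\un$ split off and correspond to idempotents of $Z_\cS$ by \cite[Prop.\,3.2]{BVK}). One quibble on $(4)\Rightarrow(5)$: do not post-compose with a forgetful functor to $\Q$-vector spaces, which is not monoidal for the $K$-linear tensor product; the faithful exact strong tensor functor $\bar\Psi_{H_0}$ into finite-dimensional $K$-vector spaces is already a fibre functor over $K$.

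The genuine gap is $(1)\Rightarrow(2)$, which you announce as a plan rather than carry out, and the plan as sketched does not work. First, the target of your contradiction, ``elements of $Z_\cS$ whose product lies in $\cI_\cS=0$,'' conflates the tensor ideal $\cI_\cS$ (a class of objects, zero in $\cMW_\cS$ by construction) with an ideal of the ring $Z_\cS$; you never explain how a nonzero $N\in\ker\rho_H$ produces a zero-divisor. Second, categorical dimensions are the wrong invariant: in a non-semisimple rigid $\Q$-linear category a nonzero object may have $\tr(\mathrm{id}_N)=0$, and nothing forces the classical realisations to ``detect an honest integer dimension'' of $N$, which is an abstract object of $\cMW_\cS$ and not the cohomology of a single variety. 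The argument you are missing is simpler and needs neither traces nor semisimplicity. Condition $(1)$ is equivalent to $\un$ being simple: a non-trivial subobject of $\un$ is a direct summand and yields a non-trivial idempotent, hence a zero-divisor, in $Z_\cS$; conversely a simple $\un$ has $\End(\un)$ a field by Schur. And in a rigid $\otimes$-abelian category with simple unit every nonzero Serre tensor ideal is the whole category: if $0\neq N$ lies in the ideal, so does $N\otimes N^\vee$; the coevaluation $\un\to N\otimes N^\vee$ is nonzero because its composite in the triangle identity is $\mathrm{id}_N$, so its image is simultaneously a nonzero quotient of $\un$ (hence isomorphic to $\un$ by simplicity) and a subobject of $N\otimes N^\vee$ (hence in the Serre ideal); then $A\cong\un\otimes A$ places every object in the ideal. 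Applying this to the proper Serre tensor ideal $\ker\rho_H$ (proper since $\rho_H(\un)\neq 0$) gives $\ker\rho_H=0$, i.e.\ $\rho_H$ faithful, which is exactly $(2)$.
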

Since $\cMW_\cS$ is rigid if $Z_\cS$ is a domain then is a field. Actually, $Z_\cS$ is a domain if and only if $\un$ is simple. In fact, subojects $U$ of $\un$ correspond to splittings $\un\cong U\oplus U^\perp$ whence to idempotents $e\in Z_\cS$ (see \cite[Prop.\,3.2]{BVK}). Parallel to a theory of pure motives as in \cite[\S 3]{BV} we set:
\begin{defn}
    Say that a \emph{theory of mixed motives} exists for $\cS$ if  $\cMW_\cS$ is connected.
\end{defn}
This can be translated by saying that for such a class the universal cohomology of the point is simple. In characteristic zero, a theory of mixed motives exists for classical Weil cohomologies: it coincides with Nori motives by Theorem \ref{Nori} since all classical Weil cohomologies are comparable and therefore equivalent. For $\cS$ identified with the class of mixed tight Weil cohomologies we have that $\cMW_\cS=\cMW^+$. Akin to the standard hypothesis that $\cW_\ab^+$ is connected \cite[Hyp.\,3.10]{BV} or Ayoub’s conjecture \cite[Conj.\,3.20]{AW} we have:
\begin{hyp}\label{standard}
A theory of mixed motives exists for the class of mixed tight Weil cohomologies: equivalently, $\cMW^+$ is connected.
\end{hyp}
\begin{remarks}\label{rmkstd}
a) The Hypothesis \ref{standard} is also equivalent to say that $\cMW^+\cong \cMW_H$ for any tight $H$ and/or to say that $\cMW^+$ is Tannakian (by Theorem \ref{mixmot} applied to the class of mixed tight Weil cohomologies). In particular, idependently of $\ell\neq \car(k)$, this implies that all $\ell$-adic cohomologies are equivalent and the Tannakian category $\cMW^+\cong \cMW_H$ for $H=H_\ell$ an $\ell$-adic cohomology can just be considered the $\ell$-adic analogue of Nori motives, see also Remark \ref{basic}.\\

b) Conversely, if $\cMW^+\cong \cMW_H$ for one $\ell$-adic cohomology $H=H_\ell$ then  $\cMW^+$ is Tannakian, the Hypothesis \ref{standard} holds true and the same is true for all primes $\ell\neq \car(k)$. In characteristic zero, for $H=H_{\rm Betti}$, $\cMW^+\cong \cMW_H$ is then also equivalent to the actual Nori motives $\cNM$ by Theorem \ref{Nori} and Nori motives are then universal for all mixed tight Weil 
cohomologies.\\

c) Without the Hypothesis \ref{standard}, the picture for any classical $H$ over a subfield of the complex numbers is the following
$$\xymatrix{\cW_\ab^+\ar[d]_{\Phi^+}\ar[r]_{} &\cW_H^\ab\ar[d]_{\Xi_H} & \ar[l]_{\simeq}^{\theta_H}\cM_H^A\ar[d]  \\
\cMW^+\ar[r]_{} &\cMW_H\ar[r]^{\simeq}_{\bar\Psi_{\rm Nori}} & \cNM & }$$
where the functor $\Xi_H$ is fully faithful. However, in arbitrary characteristic, only the left square of the diagram above is canonically defined for any $H$ and the fully faithfullness of $\Xi_H$ as well as that of $\Phi^+$ should be appropriately investigated. 
\end{remarks}

\subsection{Conjectural picture}
Let ${\rm W^+_\ab}(\cA, L)$ be the category of tight Weil cohomologies with values in $(\cA, L)$ and recall that the induced 2-functor ${\rm W^+_\ab}: \Ex_*^\rig \to \Cat$ is 2-represented by the universal tight Weil cohomology $(\cW^+_\ab, W_\ab^+)$ (see Theorem \cite[Thm.\,8.4.1]{BVKW}). 
For $H\in\MW^+(\cA, L)$ denote $H_{\rm pure}:= H\iota\in {\rm W^+_\ab}(\cA, L)$ the restriction via $\iota$ of $H$ as in Lemma \ref{pure}. We have:
\begin{lemma} \label{purification}
The restriction or purification functor 
$$\Pi_{(\cA, L)}: \MW^+(\cA, L)\to {\rm W^+_\ab}(\cA, L)$$
sending $H$ to $H_{\rm pure}$ is natural in ${(\cA, L)}$. The cohomology $H_{\rm pure}$ takes  values in a subcategory $\cA_{\rm pure}\subseteq \cA$ with the same Lefschetz object $L\in\cA_{\rm pure}$, defined as the strictly full abelian tensor subcategory generated by the cohomology of smooth projective varieties, \ie generated by the objects $H^p(X)(q)$ for $X\in\SmProj_k$ and $p, q\in\Z$.
\end{lemma}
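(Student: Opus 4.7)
The plan is to verify the three claims in sequence: functoriality of $\Pi_{(\cA,L)}$ on objects and morphisms, naturality in $(\cA,L)$, and the factorization of $H_{\rm pure}$ through $\cA_{\rm pure}$. On objects, the functor $\Pi_{(\cA,L)}$ is well defined by Lemma \ref{pure}, which yields that $H\iota$ is a tight Weil cohomology. On morphisms, a monoidal natural transformation $\nu : H \to H'$ compatible with the traces restricts along the full tensor embedding $\iota: \cM_\rat^\op\hookrightarrow\DM_\gm$ to $\nu\iota : H\iota \to H'\iota$, and this restriction remains monoidal and trace-compatible because the trace datum is defined on $H^2(\Z(1))=H^2(\iota\L)$, which is in the essential image of $\iota$.

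For naturality in $(\cA,L)$, I would simply record, for any exact strong tensor morphism $F:(\cA,L)\to(\cA',L')$ in $\Ex_*^\rig$, the on-the-nose equality of functors $(F_*H)\iota = F\circ(H\iota) = F_*(H\iota)$. That $F_*H$ still lies in $\MW^+(\cA',L')$ uses only exactness, strong tensoriality, and preservation of the Lefschetz object by $F$, which transport the K\"unneth, point, trace, and tightness axioms through $F$.

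For the factorization, I would first observe that $\cA_{\rm pure}$ sits inside the rigid subcategory $\cA_{\rig}$ provided by Lemma \ref{kunp}, and check the two points needed. Namely, $L\in\cA_{\rm pure}$ because by the point axiom $H^0(\Spec k)\cong\un$ and hence $L = \un(-1) \cong H^0(\Spec k)(-1)$ is itself among the chosen generators $H^p(X)(q)$. Second, every Chow motive is (by construction of $\cM_\rat$ from $\cM_\rat^\eff$ by inverting $\L$) a direct summand of some $h(X)\otimes \L^{\otimes n}$ with $X\in\SmProj_k$ and $n\in\Z$; using the strong tensoriality of the graded restricted functor $(H\iota)^*$ (Lemma \ref{kunp}) together with $\iota\L=\Z(1)[2]$ and $H^2(\iota\L)\cong L$, each graded component $H^p(\iota M)(q)$ is a subquotient of a finite tensor product of objects of the form $H^r(X)(s)$. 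Since $\cA_{\rm pure}$ is closed under subquotients and tensor products by definition, the functor $H_{\rm pure}$ takes values in $\cA_{\rm pure}^{(\Z)}$, as claimed.

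The only step that is not entirely formal is the identification of $L$ as a generator, and this is handled at once by invoking the point axiom and the trace isomorphism; the remaining assertions reduce to the closure properties built into the definition of the generated abelian tensor subcategory and to the strong tensoriality already established in Lemma \ref{kunp}.
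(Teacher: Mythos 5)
Your proposal is correct, and it takes the intended route: the paper's own proof of Lemma \ref{purification} is literally the single word ``Straightforward,'' so your write-up is simply the natural elaboration of what the author leaves implicit (Lemma \ref{pure} for well-definedness on objects, the on-the-nose compatibility of restriction with push-forward for naturality, and the decomposition of Chow motives as summands of twists of $h(X)$ plus K\"unneth to land in $\cA_{\rm pure}$). The identification $L\cong H^0(\Spec k)(-1)$ via the point axiom is a clean way to see that the Lefschetz object is among the generators.
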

\begin{proof}
    Straightforward.
\end{proof}
For the universal mixed tight Weil cohomology of Theorem \ref{tight} we thus get the purification $MW^+_{\rm pure}$ with values in $\cMW^+_{\rm pure}$ as the restriction $MW^+_{\rm pure}:=MW^+\iota$ to $\cM_\rat$ and we also obtain that any purification $H_{\rm pure}\in{\rm W^+_\ab}(\cA, L)$ is associated with an exact tensor functor
$$\Psi_H^{\rm pure}: \cMW^+_{\rm pure}\to \cA_{\rm pure}$$
restriction of $\Psi_H^+$ providing $H_{\rm pure}$ as the push-forward of $MW^+_{\rm pure}$ along $\Psi_H^{\rm pure}$ but it is not clear that this functor is unique with this property. However, we have an exact tensor functor 
$$\Phi^+_{\rm pure}: \cW^+_\ab\to \cMW^+_{\rm pure}$$
which is a refinement of $\Phi^+$ in Theorem \ref{tight} and whose composition with $\Psi_H^{\rm pure}$ is the unique classifying functor given by the universal property of $(\cW^+_\ab, W_\ab^+)$. 

 Clearly, it will be agreable to show that $\Phi^+_{\rm pure}$ is an equivalence. Actually, for a mixed $H$ its purification $H_{\rm pure}$ should also be related with Bondarko's weight triangulated functor $t: \DM_\gm\to K^b(\cM_\rat)$ (see \cite[Prop.\,6.3.1]{BOW}) which is also symmetric monoidal (see \cite{KA}) in such a way that weight filtrations, as already noted in Remark \ref{weight}, should be playing a canonical rôle in purification. 
 
 Moreover, we then may expect that all extensions in $\cMW^+_{\rm pure}$ are splitting: this is to say that such a category is split in the sense of \cite[Def.\,5.2]{BVK} and it is equivalent to say that $\un$ (= the universal cohomology of the point) is projective (see \cite[Prop.\,5.5]{BVK}). This property is hinted by weight arguments and/or by the fact that $\DM_\gm^\eff$ is generated, as a triangulated category, by direct summands of $M(X)$ for $X\in\SmProj_k$ and any distinguished triangle with all three vertices being such $M (X)$ splits (see \cite[Cor.\,4.2.6]{V}). 
 
 Finally, in arbitrary charatceristic, it seems reasonable to expect that any mixed tight Weil cohomology is cellular as we have explained in Remark \ref{basic}. Following the pattern indicated by Nori we may then construct a symmetric monoidal triangulated functor
$$R_H^+: \DM_\gm^\op\to D^b(\cMW_H)$$
representing the universal enrichment of the (mixed) Weil cohomology via the cellular complex (dual construction of \cite[Prop.\,7.12]{CG}). Note that if this latter property holds true for the universal cohomology then it is also verified by any cohomology. Therefore, any mixed tight Weil cohomology shall be represented by such a triangulated symmetric monoidal functor $R_H$ as in the Example \ref{ex} and such that $R_H\Gamma (M(X)) \cong \oplus H^p(X)[-p]$ for $X\in\SmProj_k$ as in Remark \ref{van}. Summarizing up:
 \begin{conjecture} \label{conj}
 The following properties hold true:
 \begin{enumerate}
     \item \protect(Purity):  The functor $\Phi^+_{\rm pure}$ is a tensor equivalence. 
     \item \protect(Splitness):  The category $\cMW^+_{\rm pure}$ is split.
     \item \protect(Cellularity): $(\cMW^+,MW^+)$ is cellular.
 \end{enumerate}
 \end{conjecture}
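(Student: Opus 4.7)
The plan is to attack the three parts in order, using the purity statement (1) as the keystone: once $\Phi^+_{\rm pure}$ is known to be a tensor equivalence, both splitness and cellularity can be approached by transferring properties from $\cW^+_\ab$ and $\DM_\gm$ respectively. For (1), essential surjectivity is built into the definition of $\cMW^+_{\rm pure}$ as the subcategory generated by the $MW^+(\iota h(X))$, so the content is faithfulness and fullness. First I would construct a candidate quasi-inverse $\Psi^+_{\rm pure}: \cMW^+_{\rm pure}\to \cW^+_\ab$ by noting that the restriction $MW^+\iota$ is a tight Weil cohomology (Lemma \ref{pure}), so by the universal property of $(\cW^+_\ab, W^+_\ab)$ in \cite[Thm.\,8.4.1]{BVKW} there is a unique exact tensor classifying functor $\cW^+_\ab \to \cMW^+_{\rm pure}$, and this should become an equivalence once the target is shown to have no extra morphisms. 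To verify this I would use Bondarko's Chow weight structure: the weight filtration on cohomology objects (Remark \ref{weight}) forces any morphism $MW^+(\iota h(X))\to MW^+(\iota h(Y))$ to factor through the corresponding morphism set in $\cM_\rat$, so its lifting to $\cW^+_\ab$ is unambiguous and furnishes the inverse natural transformation.

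For (2), the strategy is to show directly that $\un\in\cMW^+_{\rm pure}$ is projective, which by \cite[Prop.\,5.5]{BVK} is equivalent to splitness. Since $\DM_\gm^\eff$ is generated as a thick tensor subcategory by direct summands of $M(X)$ for $X\in\SmProj_k$, and every distinguished triangle with all three vertices of this form splits by \cite[Cor.\,4.2.6]{V}, the cohomological functor $MW^+$ takes such triangles to split short exact sequences in $\cMW^+_{\rm pure}$. Combined with (1), any extension $0\to A\to B\to \un \to 0$ in $\cMW^+_{\rm pure}$ is then transported to an extension problem between pure Chow-like objects in $\cW^+_\ab$, controlled by a tight Weil formalism whose splitting properties follow from rigidity and the semisimplicity arguments already deployed in the pure setting in \cite[\S 3]{BV}.

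For (3) I would follow the sketch of Remark \ref{basic} and construct $R^+:\DM_\gm^\op\to D^b(\cMW^+)$ by imitating Nori's cellular complex on $\Sch_k^\square$. The structural inputs are already in hand for $MW^+$: excision (Lemma \ref{rel}), relative duality (Remark \ref{reldual}), weak Lefschetz (Definition \ref{mixtightweil}) and the vanishing in Remark \ref{van}. These suffice to run Nori's direct proof of the basic Lemma in characteristic zero as reported in \cite[Thm.\,2.5.1]{HMN}, producing filtered affine covers whose associated cellular complex represents the universal relative cohomology. In positive characteristic I would substitute Beilinson's version \cite[Thm.\,2.5.7]{HMN} and verify that the resulting cellular complex is compatible with the universal tensor product on $\cMW^+$, not merely with a single realisation; cellularity for $MW^+$ would then propagate to $R^+_H:=R\Psi_H^+\circ R^+$ for any tight $H$, as in the discussion following Hypothesis \ref{standard}.

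The main obstacle lies in part (3) in positive characteristic. Nori's basic Lemma in characteristic zero relies on generic hyperplane sections and Artin neighbourhoods, and even there its tensor compatibility is delicate—it is what motivated the good-pair formalism of \cite{HMN}; obtaining it intrinsically in $\cMW^+$, without appealing to a fibre functor (which is not available outside Hypothesis \ref{standard}), seems to require genuinely new tensor-theoretic input. A secondary difficulty, which affects both (1) and (2), is that the weight argument underlying fullness of $\Phi^+_{\rm pure}$ must be established for the universal theory itself rather than for a specific realisation; equivalently, one needs conservativity of the purification functor $\Pi_{(\cA,L)}$ of Lemma \ref{purification} applied to the universal example, and this refinement of Bondarko's theory for the abelian target $\cMW^+$ — rather than for Chow motives with coefficients — should be handled with care rather than taken for granted.
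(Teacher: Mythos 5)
This statement is labelled a \emph{Conjecture} in the paper and is not proved there: the text surrounding it (and Remark \ref{basic}) only offers motivation --- Bondarko's weight complex functor for (1), the splitting of distinguished triangles with Chow-motive vertices and projectivity of $\un$ for (2), and Nori's/Beilinson's basic lemma for (3). Your proposal reproduces exactly these heuristics and, to your credit, flags the obstacles honestly; but it does not close any of them, so it is a strategy sketch rather than a proof, and no proof exists in the paper to compare it against.

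To be concrete about where the sketch would fail if pushed. For (1), fullness and faithfulness of $\Phi^+_{\rm pure}$ is the hard content: the category $\cMW^+$ is built by a chain of universal constructions and Serre-ideal quotients, so there is no a priori computation of its Hom groups, and Bondarko's weight structure lives on the triangulated category $\DM_\gm$ --- it does not directly control morphisms in the abelian quotient $\cMW^+_{\rm pure}$. Your claim that the weight filtration ``forces any morphism $MW^+(\iota h(X))\to MW^+(\iota h(Y))$ to factor through the corresponding morphism set in $\cM_\rat$'' is essentially the assertion that morphisms in the universal category are of geometric origin, which is the open problem (compare the discussion of full faithfulness of $\Xi_H$ and $\Phi^+$ in Remark \ref{rmkstd} c) and Proposition \ref{prop}, where it is only obtained conditionally). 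For (2), the observation that $MW^+$ sends split triangles to split exact sequences only handles extensions arising from triangles in $\DM_\gm$; the abelian category $\cMW^+_{\rm pure}$ is closed under subquotients and may contain extensions with no such triangulated origin, and the ``semisimplicity arguments'' of \cite[\S 3]{BV} you invoke are themselves conditional on standard-conjecture-type hypotheses. For (3), as you note, the basic lemma is not available intrinsically in positive characteristic, and even in characteristic zero its tensor compatibility for the universal theory (rather than a single realisation with a fibre functor) is not established. Each of these is a genuine missing idea, not a routine verification.
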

These properties are apparently weaker than the existence of a theory of pure or mixed motives (for tight cohomologies).
\begin{propose} \label{prop}
If $\Phi^+$ is fully faithful, \eg under the purity Conjecture \ref{conj} (1), we have that the Hypothesis \ref{standard} is equivalent to the Standard Hypothesis \cite[Hyp.\,3.10]{BV}. Moreover, for $k$ a subfield of the complex numbers, these hypotheses imply the Conjecture \ref{conj} and identify the fully faithful functors $\Xi_H$ and $\Phi^+$ as follows
$$\xymatrix{\cW_\ab^+\ar[d]_{\Phi^+}\ar[r]^{\simeq} &\cW_H^\ab\ar[d]^{\Xi_H}  \\
\cMW^+\ar[r]_{\simeq} & \cMW^+_H}$$
for every $H$ tight. 
\end{propose}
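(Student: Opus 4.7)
The plan is to reduce everything to two key inputs: under the hypotheses, (i) the identification $\cMW^+\iso \cNM$ provided by Theorems \ref{mixmot} and \ref{Nori} with $H=H_{\rm Betti}$, and (ii) the pure analogue $\cW_\ab^+\iso \cM^A_H$ coming from the Standard Hypothesis together with \cite[Thm.\,9.3.3]{BVKW}. The equivalence of the two hypotheses, the three parts of Conjecture \ref{conj}, and the displayed diagram will then follow by transport, with the Arapura--Harrer identification of pure Nori motives with Andr\'e motives \cite[Thm.\,5.5]{AN} supplying the purity statement.

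First I would prove the equivalence of the hypotheses. Since $\Phi^+$ is a strong tensor functor it sends $\un$ to $\un$, so full faithfulness yields a canonical isomorphism
$$\End_{\cW_\ab^+}(\un)\iso \End_{\cMW^+}(\un).$$
Hence one endomorphism ring is a domain if and only if the other is. By Theorem \ref{mixmot} applied to the class of tight Weil cohomologies (whose universal target is $\cMW^+$) and by its pure analogue in \cite[\S 3]{BV}, this is exactly the equivalence of Hypothesis \ref{standard} with the Standard Hypothesis \cite[Hyp.\,3.10]{BV}.

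Next, assume $k\subseteq\C$ and the (equivalent) hypotheses. Theorem \ref{mixmot}(5)$\Rightarrow$(4) applied to the class of tight Weil cohomologies yields $\cMW^+\iso \cMW^+_H$ for every tight $H$; taking $H=H_{\rm Betti}$ and using Theorem \ref{Nori} one obtains $\cMW^+\iso \cNM$. The pure analogue, together with \cite[Thm.\,9.3.3]{BVKW}, gives $\cW_\ab^+\iso\cW_H^\ab\iso\cM^A_H$. For Conjecture \ref{conj}(1) I would invoke Lemma \ref{purification}: $\cMW^+_{\rm pure}$ is generated by the $MW^+(X)(q)$ for $X\in\SmProj_k$ and $q\in\Z$; through the equivalence $\cMW^+\iso \cNM$ these correspond to $H^i_{\rm Nori}(X)(q)$, and the tensor subcategory they generate is $\cM^A_H\iso\cW_\ab^+$ by \cite[Thm.\,5.5]{AN} (or \cite[Thm.\,10.2.7]{HMN}), so that $\Phi^+_{\rm pure}$ is a tensor equivalence. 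For (2), $\cM^A_H$ is semisimple by \cite[Thm.\,0.4]{A}, whence split, and the property transfers through the equivalence. For (3), the Nori realisation $R_{\rm Nori}:\DM_\gm^\op\to D^b(\cNM)$ recalled in Theorem \ref{Nori} lifts $H_{\rm Nori}\cong MW^+$; transporting it through $D^b(\cMW^+)\iso D^b(\cNM)$ furnishes the required cellular lift $R^+$.

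Finally, the commutative square from Theorem \ref{initial}
$$\xymatrix{\cW_\ab^+\ar[d]_{\Phi^+}\ar[r] &\cW_H^\ab\ar[d]^{\Xi_H} \\ \cMW^+\ar[r] & \cMW^+_H}$$
has both horizontal arrows equivalences under our hypotheses (the bottom by Theorem \ref{mixmot} for mixed tight cohomologies, the top by its pure analogue), so $\Xi_H$ is identified with $\Phi^+$ for every tight $H$. The main obstacle I anticipate is Conjecture \ref{conj}(1): one must verify that the abstract equivalence $\cMW^+\iso \cNM$ produced by the universal property actually matches the tautological generators of $\cMW^+_{\rm pure}$ with the pure Nori motives, so that Arapura--Harrer applies. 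This is essentially bookkeeping with universal properties, but it is the place where our construction out of Voevodsky motives is genuinely compared with Nori's quiver-theoretic construction.
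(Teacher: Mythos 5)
Your proposal is correct and follows essentially the same route as the paper's own (much terser) proof: identify $\End(\un)$ on both sides of the fully faithful $\Phi^+$ to get the equivalence of the two hypotheses via Theorem \ref{mixmot} and its pure analogue, then over $k\subseteq\C$ take $H=H_{\rm Betti}$, use Theorem \ref{Nori} and the identification of Andr\'e motives with pure Nori motives to verify Conjecture \ref{conj}, and read off the square from Theorem \ref{initial}. The extra detail you supply for parts (2) and (3) of the Conjecture (semisimplicity via Andr\'e, cellularity via $R_{\rm Nori}$) is exactly what the paper's introduction indicates.
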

\begin{proof} If $\Phi^+$ is fully faithful then the ring $Z_+:=\End (\un)$ is the same for both source $\cW^+_\ab$ and target $\cMW^+$ of $\Phi^+$.
Over a subfield of the complex numbers, if $Z_+$ is a domain then $Z_+=\Q$, $\cW^+_\ab\cong \cW^+_H$ and $\cMW^+\cong \cMW^+_H$ for any $H$ tight and $\Phi^+\cong \Xi_H$ too. Picking up $H=H_{\rm Betti}$ and refining the diagram in Remark \ref{rmkstd} c) we obtain that Conjecture \ref{conj} is verified by Theorem \ref{Nori} and the well known property that Andr\'e motives are pure Nori motives.
\end{proof} 

Grothendieck standard conjectures further imply that $\cW_\ab^+$ is given by Grothendieck motives as explained in \cite[Thm.\,3.8]{BV}. Over $k =\bar \Q$, by the way, the period or fullness conjecture in \cite[\S 1.3]{ABVB} or \cite[Conj.\,6.3]{HU} imply the Grothendieck standard conjectures.

\begin{remarks} The interested reader can certainly inspect more general constructions or suitable variants. For example:\\

a) for integral coefficients by considering the larger category of motivic \'etale sheaves $\DM_\et$ and cohomological additive functors with respect to arbitrary direct sums taking values in Grothendieck abelian right exact tensor categories;\\

b) for non homotopical invariant cohomologies or other variants of Voevodsky motives based on schemes with modulus or logarithmic motives \cite{LM} with respect to Fontaine-Illusie logarithmic geometry and log cohomologies such as log Betti, log \'etale and log de Rham \cite{LB};\\

c) for cohomology theories with coefficients in an integrable connection, as de Rham cohomology or rapid decay cohomology,  generalising exponential motives \cite{EM}.
\end{remarks}


\end{document}